
 
%

\documentclass[11pt]{amsart}
\usepackage{prettyref}
\usepackage{amsthm}
\usepackage{amsmath}
\usepackage{amssymb}

\addtolength{\hoffset}{-0.5cm}
\addtolength{\textwidth}{1cm}

\makeatletter
\theoremstyle{plain}
\newtheorem{thm}{Theorem}
  \theoremstyle{plain}
  
  \theoremstyle{definition}
  \newtheorem{defn}[thm]{Definition}
  \theoremstyle{plain}
  \newtheorem{lem}[thm]{Lemma}
  \newcounter{casectr}
  \newenvironment{caseenv}
  {\begin{list}{{\itshape\ Case} \arabic{casectr}.}{%
   \setlength{\leftmargin}{\labelwidth}
   \addtolength{\leftmargin}{\parskip}
   \setlength{\itemindent}{\listparindent}
   \setlength{\itemsep}{\medskipamount}
   \setlength{\topsep}{\itemsep}}
   \setcounter{casectr}{0}
   \usecounter{casectr}}
  {\end{list}}
  \theoremstyle{plain}
  \newtheorem{cor}[thm]{Corollary}
  \theoremstyle{plain}
  \newtheorem*{cor*}{Corollary}
  \theoremstyle{plain}
  \newtheorem{question}[thm]{Question}

\makeatother

\begin{document}

\def\COMMENT#1{}
\def\TASK#1{}
\newdimen\margin  
\def\textno#1&#2\par{%
    \margin=\hsize
    \advance\margin by -4\parindent
           \setbox1=\hbox{\sl#1}%
    \ifdim\wd1 < \margin
       $$\box1\eqno#2$$%
    \else
       \bigbreak
       \hbox to \hsize{\indent$\vcenter{\advance\hsize by -3\parindent
       \sl\noindent#1}\hfil#2$}%
       \bigbreak
    \fi}

\global\long\def\E{\mathbb{E}}
\global\long\def\P{\mathbb{P}}
\global\long\def\var{\textnormal{var}}
\global\long\def\cov{\textnormal{cov}}


\global\long\def\C{\mathbb{C}}
\global\long\def\cC{\mathcal{C}}
\global\long\def\N{\mathbb{N}}
\global\long\def\cH{\mathcal{H}}
\global\long\def\cP{\mathcal{P}}
\global\long\def\Q{\mathbb{Q}}
\global\long\def\R{\mathbb{R}}
\global\long\def\Z{\mathbb{Z}}


\global\long\def\mod{\;(\textnormal{mod}\;}
\global\long\def\f{\mathbb{F}}


\global\long\def\norm#1{\left|\left|#1\right|\right|}
\global\long\def\ip#1#2{\langle#1,#2\rangle}
\global\long\def\per{\textnormal{per}}
\global\long\def\det{\textnormal{det}}
\global\long\def\tr{\textnormal{Tr}}


\global\long\def\join{\vee}
\global\long\def\meet{\wedge}


\global\long\def\eps{\varepsilon}
\global\long\def\supp{\textnormal{supp }}
\global\long\def\re{\textnormal{Re }}
\global\long\def\im{\textnormal{Im }}
\global\long\def\floor#1{\left\lfloor #1\right\rfloor }
\global\long\def\ceil#1{\left\lceil #1\right\rceil }

\def\noproof{{\unskip\nobreak\hfill\penalty50\hskip2em\hbox{}\nobreak\hfill%
       $\square$\parfillskip=0pt\finalhyphendemerits=0\par}\goodbreak}
\def\endproof{\noproof\bigskip}

\renewcommand{\labelenumi}{(\roman{enumi})}

\newrefformat{cor}{Corollary \ref{#1}}

\title{Optimal covers with Hamilton cycles in random graphs}
\author{Dan Hefetz, Daniela K\"uhn, John Lapinskas and Deryk Osthus}
\thanks {The research leading to these results was partially supported by the European Research Council
under the European Union's Seventh Framework Programme (FP/2007--2013) / ERC Grant
Agreement n. 258345 (D.~K\"uhn).}
\date{\today} 

\begin{abstract} 
A packing of a graph $G$ with Hamilton cycles is a set of edge-disjoint Hamilton cycles in $G$.
Such packings have been studied intensively and recent results imply that a largest packing of Hamilton cycles in $G_{n,p}$
a.a.s.~has size $\lfloor \delta(G_{n,p}) /2 \rfloor$.
Glebov, Krivelevich and Szab\'o recently initiated research on the `dual' problem, where
one asks for a set of Hamilton cycles covering all edges of $G$.
Our main result states that for $\frac{\log^{117}n}{n}\le p\le 1-n^{-1/8}$, a.a.s.~the edges of $G_{n,p}$ can be covered by 
$\ceil{\Delta(G_{n,p})/2}$ Hamilton cycles. This is clearly optimal and improves an approximate result 
of Glebov, Krivelevich and Szab\'o, which holds for $p \ge n^{-1+\eps}$.
Our proof is based on a result of Knox, K\"uhn and Osthus on packing Hamilton cycles in pseudorandom graphs.
\end{abstract}

\maketitle

\section{Introduction}
Given graphs $H$ and $G$, an $H$-decomposition of $G$ is a set of edge-disjoint copies of $H$ in $G$ which cover all edges of $G$.
The study of such decompositions forms an important area of Combinatorics but it is notoriously difficult.
Often an $H$-decomposition does not exist (or it may be out of reach of current methods).
In this case, the natural approach is to study the packing and covering versions of the problem.
Here an \emph{$H$-packing} is a set of edge-disjoint copies of $H$ in $G$ and an \emph{$H$-covering} 
is a set of (not necessarily edge-disjoint) copies of $H$ covering all the edges of $G$.
An $H$-packing is \emph{optimal} if it has the largest possible size and an $H$-covering is \emph{optimal} if it has the smallest possible size.
The two problems of finding (nearly) optimal packings and coverings may be viewed as `dual' to each other.

By far the most famous problem of this kind is the Erd\H{o}s-Hanani problem on packing and covering a complete $r$-uniform hypergraph with $k$-cliques,
which was solved by R\"odl~\cite{Rodlnibble}. In this case, it turns out that the (asymptotic) covering and packing versions of the problem are trivially equivalent
and the solutions have approximately the same value.

Packings of Hamilton cycles in random graphs $G_{n,p}$ were first studied by Bollob\'as and Frieze~\cite{BF85}.
(Here $G_{n,p}$ denotes the binomial random graph on $n$ vertices with edge probability $p$.)
Recently, the problem of finding optimal packings of edge-disjoint Hamilton cycles in a random graph has received a large amount 
of attention, leading to its complete solution in a series of papers by several authors (see below for more details on the history of the problem).
The size of a packing of Hamilton cycles in a graph $G$ is obviously at most $\lfloor \delta(G)/2 \rfloor$, and this trivial bound turns out to be tight in the case of $G_{n,p}$
for \emph{any} $p$. 

The covering version of the problem was first investigated by Glebov, Krivelevich and Szab\'o~\cite{GKS}.
Note that the trivial bound on the size an optimal covering of a graph $G$ with Hamilton cycles is $\lceil \Delta(G)/2 \rceil$.
They showed that for $p \ge n^{-1+\eps}$, this bound is a.a.s.~approximately tight, i.e.~in this range, 
a.a.s.~the edges of $G_{n,p}$ can be covered with $(1+o(1))\Delta(G_{n,p})/2$ Hamilton cycles.
Here we say that a property $A$ holds a.a.s.~(asymptotically almost surely), if the probability that $A$ holds tends to $1$ as $n$ tends to infinity.

The authors of~\cite{GKS} also conjectured that their approximate bound could be extended to any $p \gg \log n/n$.
We are able to go further and prove the corresponding exact bound, unless $p$ tends to $0$ or $1$ rather quickly.
\begin{thm}\label{thm:main-result}
Suppose that $G\sim G_{n,p}$, where $\frac{\log^{117}n}{n}\le p\le1-n^{-1/8}$.
Then a.a.s.~the edges of $G$ can be covered by $\ceil{\Delta(G)/2}$
Hamilton cycles.
\end{thm}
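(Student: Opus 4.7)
Let $\Delta := \Delta(G)$ and $k := \lceil\Delta/2\rceil$. The starting observation is that any cover of $E(G)$ by $k$ Hamilton cycles uses exactly $2k$ edge-slots at each vertex $v$, of which $d_G(v)$ must be occupied by distinct edges and the remaining $2k - d_G(v)\in\{0,1,\dots,\Delta-\delta+1\}$ must be repeats. Writing $F\subseteq G$ for the graph of doubly-covered edges, this reformulates the task as: find $F\subseteq G$ with $d_F(v) = 2k - d_G(v)$ for every $v$, and exhibit a Hamilton decomposition of the $2k$-regular multigraph $G^{\ast} := G + F$ (where each $F$-edge is taken with multiplicity two). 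Since a.a.s.\ $\Delta - \delta = O(\sqrt{np\log n})$ in $G_{n,p}$, the defect graph $F$ has maximum degree much smaller than $np$, so $G^{\ast}$ differs from $G$ by only a sparse perturbation.

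My plan would then split into two steps. First, \emph{construct $F$}: the prescription $f(v) := 2k - d_G(v)$ is nonnegative with even sum and satisfies $f(v)\ll d_G(v)$ uniformly, so an $f$-factor of $G$ exists by a Tutte-type criterion applied to the pseudorandom graph $G_{n,p}$. To preserve pseudorandomness, I would take $F$ to be (essentially) a uniformly random $f$-factor of $G$ and use standard concentration to check that the pseudorandomness parameters needed by Knox--K\"uhn--Osthus (spectral gap, edge-discrepancy, or jumbledness, as appropriate) are inherited by $G^{\ast}$ up to lower-order terms. Second, \emph{Hamilton-decompose $G^{\ast}$}: since $G^{\ast}$ is $2k$-regular on $n$ vertices it has exactly $kn$ edges, so any packing of $k$ edge-disjoint Hamilton cycles is automatically a Hamilton decomposition. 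Apply the KKO packing theorem to the $2k$-regular pseudorandom multigraph $G^{\ast}$ to produce such a packing, and project back to $G$: the $k$ resulting Hamilton cycles cover every edge of $G$ (edges of $F$ twice, all others once), giving Theorem~\ref{thm:main-result}.

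The main obstacle is the second step, since KKO is stated for simple pseudorandom graphs rather than multigraphs. Because $F$ is very sparse ($\Delta(F)=o(np)$), the doubled edges form a thin subgraph and the absorbing/rotation-extension machinery underlying KKO ought to adapt with only local modifications, but tracking this carefully through the long KKO proof (or extracting a black-box multigraph analogue) is delicate and is where I expect most of the technical work to lie. A secondary difficulty is verifying in the first step that the random $f$-factor $F$ can be chosen so that $G^{\ast}$ meets the precise pseudorandomness hypotheses KKO requires; here the fact that $f(v)$ depends only on the degree sequence of $G_{n,p}$ (itself strongly concentrated) should let one build $F$ by a short random-greedy procedure and then apply standard concentration to conclude.
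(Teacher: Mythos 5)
Your reformulation (find $F$ with $d_F(v)=2k-d_G(v)$, then Hamilton-decompose the $2k$-regular multigraph $G+F$) is the natural one, but the decomposition step assumes a tool that does not exist in this sparsity range, and this is precisely the difficulty the paper is organised around. The Knox--K\"uhn--Osthus result is \emph{not} a Hamilton decomposition theorem for $2k$-regular pseudorandom (multi)graphs with $np$ as small as $\log^{O(1)}n$. Its key technical lemma (Lemma~\ref{lem:prcovering} here) is a \emph{covering} statement: given a regular $H_0$ inside a pseudorandom $G$ together with auxiliary sparse pseudorandom graphs $G_1,\dots,G_{2m+1}$, one finds edge-disjoint Hamilton cycles whose union \emph{contains} $H_0$ but necessarily spills over into the $G_i$'s. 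There is no version producing exactly $d(H_0)/2$ cycles using only $E(H_0)$; Hamilton decompositions of regular expanders are only known in the dense regime (degree $\ge \alpha n$), far above $p=\log^{117}n/n$. So ``apply the KKO packing theorem to $G^{\ast}$ and get $k$ cycles with $kn$ edges, hence a decomposition'' begs the whole question. The paper instead iterates the covering lemma three times: each round regularises the leftover of the previous round via an $f$-factor drawn from a reserved random slice of $G$, covers it, and passes the spill-over to the next round; a set $F$ of edges at the unique maximum-degree vertex $x_0$ is withheld so that no edge at $x_0$ is ever covered twice, and $F$ is finished off at the end by $|F|/2$ extra Hamilton cycles obtained from Hamilton-connectivity of $G-x_0$. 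The count $\lceil\Delta(G)/2\rceil$ comes out of this bookkeeping at $x_0$, not from a global decomposition.

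There is also a gap in your first step. With $f(v)=2k-d_G(v)$ one has $f(v)\approx\Delta(G)-d_G(v)$, which for the vertices of near-maximum degree is as small as the gap between the top degrees, roughly $\sqrt{np(1-p)}/\log n\ll\sqrt{np}$; a standard second-moment computation shows that $e^{\Theta(\sqrt{\log n})}$ vertices have $f(v)<\sqrt{np}$. The Tutte-based $f$-factor lemma available in this setting (Lemma~\ref{lem:ffactors}) tolerates only \emph{one} vertex with small $f$-value, since the case analysis needs $\sum_{x\in X}f(x)$ to dominate $|X|\sqrt{np}$. So even the construction of $F$ requires either a different Tutte verification or, as in the paper, applying the regularisation only to graphs that are $\sqrt{np}$-downjumping at a single exceptional vertex. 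Neither gap looks like a ``local modification''; together they are essentially the content of the proof.
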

Note that the exact bound fails when $p$ is sufficiently large. Indeed, let $n\ge 5$ be odd and take $p = 1 - n^{-2}$. Then with $\Omega(1)$ probability\COMMENT{This happens with probability 
\[\binom{n}{2}\left(1-\frac{1}{n^{2}}\right)^{\binom{n}{2}-1}\frac{1}{n^2} \ge \frac{1}{4}\left(1-\frac{1}{n^2}\right)^{n^2} \ge \frac{1}{4}e^{-1-\frac{1}{n^2}} \ge \frac{1}{100}.\]}, $G\sim G_{n,p}$ is the complete graph with one edge $uv$ removed. We claim that in this case, $G$ cannot be covered by $(n-1)/2$ Hamilton cycles. Suppose such a cover exists. Then exactly one edge is contained in more than one Hamilton cycle in the cover. But $u$ and $v$ both have odd degrees, and hence are both incident to an edge contained in more than one Hamilton cycle. Since $uv \notin E(G)$, these edges must be distinct and we have a contradiction. 

Note also that even though our result does not hold for $p > 1 - n^{-1/8}$, it still implies the conjecture of~\cite{GKS} in this range. Indeed, if $G \sim G_{n,p}$ with $p > 1 - n^{-1/8}$, we may simply partition $G$ into two edge-disjoint graphs uniformly at random and apply Theorem~\ref{thm:main-result} to each one to a.a.s. cover $G$ with $(1+o(1))n/2$ Hamilton cycles.

Unlike the situation with the Erd\H{o}s-Hanani problem, the packing and covering problems are not equivalent in the case of Hamilton cycles.
However, they do turn out to be closely related, so we now summarize the known results leading to the solution of the packing problem for Hamilton cycles in 
random graphs. Here `exact' refers to a bound of $\lfloor \delta(G_{n,p})/2 \rfloor$, and $\varepsilon$ is a positive constant.
$$
\begin{array}{l|l|l}
\mbox{authors}   & \mbox{range of } p &   \\
\hline
\mbox{Ajtai, Koml\'os \& Szemer\'edi~\cite{Komlos}} &  \delta(G_{n,p}) =2 & \mbox{exact}  \\
\mbox{Bollob\'as \& Frieze~\cite{BF85}}   &  \delta(G_{n,p}) \mbox{ bounded} & \mbox{exact}  \\
\mbox{Frieze \& Krivelevich~\cite{fk}} & p \mbox{  constant} & \mbox{approx.}  \\
\mbox{Frieze \& Krivelevich~\cite{FK08}} & p=\frac{(1+o(1))\log n}{n} & \mbox{exact}  \\
\mbox{Knox, K\"uhn \& Osthus~\cite{AHDoRG}} & p \gg \frac{\log n}{n} & \mbox{approx.}  \\
\mbox{Ben-Shimon, Krivelevich \& Sudakov~\cite{BKS}} & \frac{(1+o(1))\log n}{n}\le p\le \frac{1.02\log n}{n} & \mbox{exact}  \\
\mbox{Knox, K\"uhn \& Osthus~\cite{Knox2011e}} & \frac{\log^{50}n}{n} \le p \le 1- n^{-1/5} & \mbox{exact}  \\
\mbox{Krivelevich \& Samotij~\cite{KrS}} & \frac{\log n}{n} \le p \le n^{-1 + \eps} & \mbox{exact}  \\
\mbox{K\"uhn \& Osthus~\cite{KOappl}} & p \ge 2/3 & \mbox{exact}  \\
\end{array}
$$
In particular, the results in~\cite{BF85,Knox2011e,KrS,KOappl} (of which~\cite{Knox2011e,KrS} cover the main range)
together show that for any $p$, a.a.s.~the size of an optimal packing of Hamilton cycles in $G_{n,p}$ is $\lfloor \delta(G_{n,p})/2 \rfloor$.
This confirms a conjecture of Frieze and Krivelevich~\cite{FK08} (a stronger conjecture was made in~\cite{fk}).

The result in~\cite{KOappl} is based on a recent result of K\"uhn and Osthus~\cite{monster} which guarantees the existence of a Hamilton decomposition in every 
regular `robustly expanding' digraph. The main application of the latter was the proof (for large tournaments)
of a conjecture of Kelly that every  regular tournament has a Hamilton decomposition.
But as discussed in~\cite{monster,KOappl}, the result in~\cite{monster} also has a number of further applications to packings of Hamilton cycles in dense graphs and (quasi-)random graphs.

Recall that the above results imply an optimal packing result for any $p$.
However, for the covering version, we need $p$ to be large enough to ensure the existence of at least one Hamilton cycle
before we can find any covering at all. This is the reason for the restriction $p \gg \log n/n$ in the conjecture of Glebov, Krivelevich and Szab\'o~\cite{GKS}
mentioned above.
However, they asked the intriguing question whether this might extend to $p$ which is closer to the threshold $\log n/n$
for the appearance of a Hamilton cycle in a random graph.
In fact, it would be interesting to know whether a `hitting time' result holds. For this, consider the well-known `evolutionary' random graph process $G_{n,t}$:
Let $G_{n,0}$ be the empty graph on $n$ vertices. Consider a random ordering of the edges of $K_n$. Let $G_{n,t}$
be obtained from $G_{n,t-1}$ by adding the $t$th edge in the ordering.
Given a property $\cP$, let $t(\cP)$ denote the \emph{hitting time} of $\cP$, i.e.~the smallest $t$ so that $G_{n,t}$ has $\cP$. 
\begin{question}
\label{con:hittime}
Let $\cC$ denote the property that an optimal covering of a graph $G$ with Hamilton cycles has size $\lceil \Delta(G)/2 \rceil$. 
Let $\cH$ denote the property that a graph $G$ has a Hamilton cycle.
Is it true that a.a.s.~$t(\cC)=t(\cH)$?
\end{question}
Note that $\cC$ is not monotone. In fact, it is not even the case that for all $t>t(\cC)$, $G_{n,t}$ a.a.s.~has $\cC$. Taking $n\ge 5$ odd and $t=\binom{n}{2}-1$, $G_{n,t}$ is the complete graph with one edge removed -- which, as noted above, may not be covered by $(n-1)/2$ Hamilton cycles. It would be interesting to determine (approximately) the ranges of $t$ such that a.a.s. $G_{n,t}$ has $\cC$.

The approximate covering result of Glebov, Krivelevich and Szab\'o~\cite{GKS} uses the approximate packing result in~\cite{AHDoRG} as a tool.
More precisely, their proof applies the result in~\cite{AHDoRG} to obtain an almost optimal packing.
Then the strategy is to add a comparatively small number of Hamilton cycles which cover the remaining edges.
Instead, our proof of Theorem~\ref{thm:main-result} is based on the main technical lemma (Lemma 47) of the exact packing result 
in~\cite{Knox2011e}. This is stated as Lemma~\ref{lem:prcovering} in the current paper and (roughly) states the following: Suppose we are given a regular graph $H$
which is close to being pseudorandom 
and a pseudorandom graph $G_1$, where $G_1$ is allowed to be surprisingly sparse compared to $H$.
Then we can find a  set of edge-disjoint Hamilton cycles in $G_1 \cup H$ covering all edges of $H$.
Our proof involves several successive applications of this result, where we eventually cover all edges of $G_{n,p}$.
In addition, our proof crucially relies on the fact that in the range of $p$ we consider,
there is a small but significant gap between the degree of the unique vertex $x_0$ of maximum degree and the other vertex degrees
(and the same holds for the vertex of minimum degree). This means that for all vertices $x  \neq x_0$, we can afford to 
cover a few edges incident to $x$ more than once. The analogous observation for the minimum degree was exploited in~\cite{Knox2011e} as well.

The result in~\cite{GKS} also holds for quasi-random graphs of edge density at least $n^{-1+\varepsilon}$, provided that they have an almost optimal packing of Hamilton cycles. It would be interesting to obtain such 
results for sparser quasi-random graphs too. 
In fact, the result in~\cite{Knox2011e} does apply in a quasi-random setting (see Theorem~48 in~\cite{Knox2011e}), but the assumptions are quite restrictive and it is not clear to which 
extent they can be used to prove results for $(n,d,\lambda)$-graphs, say. Note that even if the assumptions of~\cite{Knox2011e} could be weakened, our results would still not immediately generalise to $(n,d,\lambda)$-graphs.

This paper is organized as follows: In the next section, we collect several results and definitions regarding pseudorandom graphs, mainly from~\cite{Knox2011e}.
In Section~\ref{sec:tutte}, we apply Tutte's Theorem to give results which enable us to add a small number of
edges to certain almost-regular graphs in order to turn them into regular graphs (without increasing the maximum degree).
Finally, in Section~\ref{sec:proof} we put together all these tools to prove Theorem~\ref{thm:main-result}.

\section{Pseudorandom graphs}

The purpose of this section is to collect all the properties of $G_{n,p}$ that we need for our proof of Theorem~\ref{thm:main-result}.
Throughout the rest of the paper, we always assume that $n$ is sufficiently large for our estimates to hold. In particular, some
of our lemmas only hold for sufficiently large~$n$, but we do not state this explicitly. We write $\log$ for the natural logarithm and
$\log^a n$ for $(\log n)^a$. Given functions $f,g:\N\to \R$, we write $f=\omega(g)$ if $f/g\to \infty$ as $n\to \infty$.
We denote the average degree of a graph $G$ by $d(G)$.

We will need the following Chernoff bound (see e.g.~Theorem~2.1 in~\cite{JLR}).

\begin{lem}\label{lem:chernoff}
Suppose that $X\sim Bin(n,p)$.
For any $0<a<1$ we have $$\mathbb{P}(X \le (1-a)\mathbb{E}X) \le e^{-\frac{a^2}{3}\mathbb{E}X}.$$
\end{lem}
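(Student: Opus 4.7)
The plan is the standard exponential-moment (Chernoff) argument. The first step is to apply Markov's inequality to the nonnegative variable $e^{-tX}$: for any $t > 0$,
\[
\P(X \le (1-a)\E X) = \P\bigl(e^{-tX} \ge e^{-t(1-a)\E X}\bigr) \le e^{t(1-a)\E X}\,\E[e^{-tX}].
\]
Since $X$ is a sum of $n$ independent Bernoulli$(p)$ variables, independence gives $\E[e^{-tX}] = (1 + p(e^{-t}-1))^n$, which by the inequality $1 + x \le e^x$ is at most $\exp(\E X \cdot (e^{-t}-1))$. Combining these bounds,
\[
\P(X \le (1-a)\E X) \le \exp\bigl(\E X \cdot [(e^{-t}-1) + t(1-a)]\bigr).
\]

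Next I would optimize over $t > 0$. Differentiating the bracket in $t$ and setting the derivative equal to zero gives $-e^{-t} + (1-a) = 0$, i.e.\ $t = -\log(1-a)$, which is positive since $0 < a < 1$. Substituting this value back in simplifies the bound to
\[
\P(X \le (1-a)\E X) \le \exp\bigl(-\E X \cdot f(a)\bigr), \qquad \text{where } f(a) := a + (1-a)\log(1-a).
\]

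The main (and really the only) technical point that remains is the calculus inequality $f(a) \ge a^2/3$ on $(0,1)$. I would verify it by comparing $f$ with $g(a) := a^2/3$ at increasing levels of derivative: $f(0) = g(0) = 0$; one computes $f'(a) = -\log(1-a)$ and $g'(a) = 2a/3$, so $f'(0) = g'(0) = 0$; and $f''(a) = 1/(1-a) \ge 1 > 2/3 = g''(a)$ for all $a \in (0,1)$. Integrating the inequality $f'' \ge g''$ twice from $0$ yields $f \ge g$, which combined with the displayed bound above completes the proof. (A cruder Taylor-expansion argument would also work, but the second-derivative comparison is the cleanest way to get the clean constant $1/3$.)
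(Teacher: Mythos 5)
Your proof is correct, and it is the standard exponential-moment argument; note that the paper does not prove this lemma at all but simply cites Theorem~2.1 of Janson, {\L}uczak and Ruci\'nski, so supplying the full derivation is a perfectly reasonable (and self-contained) alternative. Every step checks out: the Markov/MGF bound, the optimal choice $t=-\log(1-a)$, and the comparison $f''(a)=1/(1-a)\ge 1 > 2/3$ with $f(0)=f'(0)=0$. In fact, since $f''(a)\ge 1$ your argument actually yields the stronger bound $\mathbb{P}(X\le(1-a)\mathbb{E}X)\le e^{-\frac{a^2}{2}\mathbb{E}X}$, of which the stated inequality with constant $\frac{1}{3}$ is an immediate consequence.
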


The following notion was first introduced by Thomason~\cite{T87}.
\begin{defn}\label{jumbled}
Let $p,\beta\ge 0$ with $p\le 1$. A graph $G$ is $(p,\beta)$-jumbled
if for all non-empty $S\subseteq V(G)$ we have $$\left|e_{G}(S)-p\binom{|S|}{2}\right|<\beta|S|.$$
\end{defn}
We will also use the following immediate consequence of Definition~\ref{jumbled}. Suppose that
$G$ is a $(p,\beta)$-jumbled graph and $X,Y\subseteq V(G)$ are disjoint. Then
\begin{equation}\label{eq:jumbled}
\left|e(X,Y)- p|X||Y|\right|\le 2\beta(|X|+|Y|).
\end{equation}
To see this, note that $e(X,Y)=e(X\cup Y)-e(X)-e(Y)$. Now~(\ref{eq:jumbled}) follows from Definition~\ref{jumbled}
by applying the triangle inequality.

The following notion was introduced in~\cite{Knox2011e}.%
    \COMMENT{This is not equivalent to requiring that if $G$ has degree sequence $d_{1}<d_{2}<\ldots<d_{n}$,
we have $d_{i}-d_{1}\ge 2(i-1)$ for all $i\le\log^{2}n+1$. We could e.g. have that $d_2=d_3=\delta+3$ and
$d_i\ge \delta+2\log^2 n$ for all $i\ge 4$.} 
\begin{defn}
Let $G$ be a graph on $n$ vertices. For a set $T\subseteq V(G)$,
let $\overline{d}_{G}(T):=\frac{1}{|T|}\sum_{t\in T} d_G(t)$ be the average degree of the vertices of
$T$ in $G$. Then $G$ is \emph{strongly $2$-jumping} if for
all non-empty $T\subseteq V(G)$  we have \[
\overline{d}_{G}(T)\ge\delta(G)+\min\{|T|-1,\log^{2}n\}.\]
\end{defn}
Note that a strongly $2$-jumping graph $G$ is `$2$-jumping', i.e.~it has a unique vertex of minimum degree and all other vertices
have degree at least $\delta(G)+2$.

The next definition collects (most of) the pseudorandomness properties that we need.

\begin{defn}\label{pseudodef}
A graph $G$ on $n$ vertices is \emph{$p$-pseudorandom} if all of
the following hold:
\begin{itemize}
\item[(P1)] $G$ is $(p,2\sqrt{np(1-p)})$-jumbled.
\item[(P2)] For any disjoint $S,T\subseteq V(G)$,
\begin{enumerate}
\item if $\left(\frac{1}{|S|}+\frac{1}{|T|}\right)\frac{\log n}{p}\ge\frac{7}{2}$,
then $e_{G}(S,T)\le2(|S|+|T|)\log n$,
\item if $\left(\frac{1}{|S|}+\frac{1}{|T|}\right)\frac{\log n}{p}\le\frac{7}{2}$,
then $e_{G}(S,T)\le7|S||T|p$.
\end{enumerate}
\item[(P3)] For any $S\subseteq V(G)$,
\begin{enumerate}
\item if $\frac{\log n}{|S|p}\ge\frac{7}{4}$, then $e(S)\le2|S|\log n$,
\item if $\frac{\log n}{|S|p}\le\frac{7}{4}$, then $e(S)\le\frac{7}{2}|S|^{2}p$.
\end{enumerate}
\item[(P4)] We have $np-2\sqrt{np\log n}\le\delta(G)\le np-200\sqrt{np(1-p)}$.
\item[(P5)] We have $\Delta(G)\le np+2\sqrt{np\log n}$.
\item[(P6)] $G$ is strongly 2-jumping.
\end{itemize}
\end{defn}
The following definition is essentially the same, except that some of the bounds are more restrictive.
\begin{defn}
A graph $G$ on $n$ vertices is \emph{strongly $p$-pseudorandom} if all of
the following hold:%
    \COMMENT{We could replace the lower bound on $\delta(G)$ in (SP4) by $np-\frac{15}{8}\sqrt{np\log n}$, but we don't use this stronger bound.}
\end{defn}
\begin{itemize}
\item[(SP1)] $G$ is $(p,\frac{3}{2}\sqrt{np(1-p)})$-jumbled.
\item[(SP2)] For any disjoint $S,T\subseteq V(G)$,
\begin{enumerate}
\item if $\left(\frac{1}{|S|}+\frac{1}{|T|}\right)\frac{\log n}{p}\ge\frac{7}{2}$,
then $e_{G}(S,T)\le \frac{3}{2}(|S|+|T|)\log n$,
\item if $\left(\frac{1}{|S|}+\frac{1}{|T|}\right)\frac{\log n}{p}\le\frac{7}{2}$,
then $e_{G}(S,T)\le6|S||T|p$.
\end{enumerate}
\item[(SP3)] For any $S\subseteq V(G)$,
\begin{enumerate}
\item if $\frac{\log n}{|S|p}\ge\frac{7}{4}$, then $e(S)\le \frac{3}{2}|S|\log n$,
\item if $\frac{\log n}{|S|p}\le\frac{7}{4}$, then $e(S)\le 3|S|^{2}p$.
\end{enumerate}
\item[(SP4)] We have $np-2\sqrt{np\log n}\le\delta(G)\le np-200\sqrt{np(1-p)}$.
\item[(SP5)] We have $\Delta(G)\le np + \frac{15}{8}\sqrt{np\log n}$.
\item[(SP6)] $G$ is strongly $2$-jumping.
\end{itemize}

The following lemma is an immediate consequence of Lemmas~9--11, 13 and~14 from \cite{Knox2011e}.%
    \COMMENT{The version on our homepage does not include the better bounds yet.}
\begin{lem}\label{lem:rgsarepr}
Let $G\sim G_{n,p}$, where $48^{2}\log^{7}n/n\le p\le 1-36\log^{\frac{7}{2}}n/\sqrt{n}$.
Then $G$ is strongly $p$-pseudorandom with probability at least $1-11/\log n$.
\end{lem}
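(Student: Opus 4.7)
The plan is to verify (SP1)--(SP6) separately by invoking the corresponding result from~\cite{Knox2011e} (namely Lemmas~9--11, 13 and~14) and then applying a union bound. Each of the six conditions is a slightly strengthened version of the corresponding condition (P1)--(P6) of Definition~\ref{pseudodef}; the proofs in~\cite{Knox2011e} already establish the stronger constants within the narrower range $48^{2}\log^{7}n/n\le p\le 1-36\log^{7/2}n/\sqrt{n}$ assumed here, rather than in the larger range that yields only the weaker $p$-pseudorandomness constants.

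For (SP1), one applies a Chernoff bound (Lemma~\ref{lem:chernoff}) to $e_{G}(S)$ for each fixed $S\subseteq V(G)$ and unions over $S$ using $\binom{n}{s}\le (en/s)^{s}$; the hypothesis $p\ge 48^{2}\log^{7} n/n$ ensures that the typical deviation $\sqrt{np(1-p)}$ beats the entropy cost of the union bound by the margin needed to tighten the constant $2$ of (P1) to $3/2$ in (SP1). Properties (SP2) and (SP3) follow in the same way: they are upper-tail bounds on $e_{G}(S,T)$ and $e_{G}(S)$ split into two regimes depending on whether $|S||T|p$ (respectively $|S|^{2}p$) is large or small compared to $\log n$, and each regime reduces to Chernoff plus a dyadic union bound over $|S|$ and $|T|$. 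The concentration statements (SP4) and (SP5) exploit the fact that each individual degree is distributed as $\mathrm{Bin}(n-1,p)$: a union bound over the $n$ vertices shows that no degree falls outside the window $[np-2\sqrt{np\log n},\,np+\tfrac{15}{8}\sqrt{np\log n}]$, while the \emph{upper} bound $\delta(G)\le np-200\sqrt{np(1-p)}$ on the minimum degree (that is, the existence of some vertex with small degree) follows from a second-moment calculation on the number of vertices whose degree falls at least $200\sqrt{np(1-p)}$ below the mean.

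The main obstacle is (SP6), the strongly $2$-jumping property, which demands that for every non-empty $T\subseteq V(G)$ the mean degree $\overline{d}_{G}(T)$ exceeds $\delta(G)$ by at least $\min\{|T|-1,\log^{2}n\}$. Equivalently, the sum of the $k$ smallest degrees must exceed $k\delta(G)+k\min\{k-1,\log^{2}n\}$ for every $k\le n$. This requires rather tight control of the entire lower tail of the degree sequence: too few vertices may have degree close to $\delta(G)$. Following Lemma~13 (or Lemma~14) of~\cite{Knox2011e}, one bounds the expected number of vertices with degree in narrow windows near $\delta(G)$ via the local central limit theorem for the binomial and then applies Markov's inequality; the restriction $p\ge 48^{2}\log^{7} n/n$ is exactly what is needed to make these expectations small enough for a union bound over all choices of $T$ (reduced to a union bound over its size and the lower-tail profile) to succeed.

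Summing the failure probabilities from (SP1)--(SP6), each of which is $O(1/\log n)$ as supplied by the corresponding lemma of~\cite{Knox2011e}, then yields the claimed total bound of $11/\log n$.
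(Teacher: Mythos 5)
Your proposal matches the paper's treatment: the paper offers no proof of this lemma beyond noting that it is an immediate consequence of Lemmas~9--11, 13 and~14 of~\cite{Knox2011e}, and your strategy of verifying (SP1)--(SP6) via those cited lemmas and combining the failure probabilities with a union bound is exactly the intended argument. The extra detail you supply about how the cited lemmas themselves are proved (Chernoff plus union bounds for (SP1)--(SP3) and (SP5), a second-moment/lower-tail analysis for (SP4) and the strongly $2$-jumping property (SP6)) is consistent with the standard arguments and does not constitute a different route.
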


The next observation shows that if we add a few edges at some vertex $x_0$ of a strongly pseudorandom graph such that
none of these edges is incident to the unique vertex of minimum degree, then we obtain a graph which is still pseudorandom.

\begin{lem}\label{lem:changingvertices}
Suppose that $G$ is a strongly $p$-pseudorandom graph with $p,1-p=\omega\left(1/n\right)$.
Let $y_1$ be the (unique) vertex of minimum degree in $G$ and let $x_0\neq y_1$ be any other vertex.
Let $F$ be a collection of edges of $K_n$ not contained in $G$ which are incident to $x_{0}$ but not to $y_1$
and such that $|F|\le \sqrt{np\log n}/8.$ Then the graph $G+F$ is $p$-pseudorandom.
\end{lem}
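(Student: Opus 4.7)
The plan is to verify each of (P1)--(P6) in turn for $G':=G+F$, using the corresponding stronger condition (SP1)--(SP6) satisfied by $G$ and bounding the contribution of the few edges of $F$. Three observations underpin every case: every edge of $F$ is incident to $x_0$, so $e_F(S) \le |S|-1$ whenever $x_0 \in S$ (and $0$ otherwise) and $e_F(S,T) \le \max(|S|,|T|)$; the hypothesis $|F| \le \sqrt{np\log n}/8$; and $p,1-p=\omega(1/n)$, which gives $np(1-p) = \omega(1)$ and $\sqrt{np\log n}\ge 8$ for large $n$.

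For (P4)--(P6), the arguments are short. Since no edge of $F$ is incident to $y_1$, we have $d_{G'}(y_1) = \delta(G)$; by (SP6), every other vertex of $G$ has degree at least $\delta(G)+2$, hence at least $\delta(G)$ in $G'$, so $\delta(G') = \delta(G)$ and (P4) follows from (SP4). For (P5), combining (SP5) with $|F|\le \sqrt{np\log n}/8$ gives $d_{G'}(x_0) \le np+\tfrac{15}{8}\sqrt{np\log n} + \tfrac{1}{8}\sqrt{np\log n} = np+2\sqrt{np\log n}$, while any $v\neq x_0$ gains at most one neighbour, and the bound $\sqrt{np\log n}\ge 8$ suffices to absorb this. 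For (P6), the inequalities $\overline{d}_{G'}(T) \ge \overline{d}_G(T)$ and $\delta(G')=\delta(G)$ mean that strong $2$-jumping is inherited verbatim from (SP6).

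For (P1)--(P3), each case reduces to checking that $e_F$ fits within the slack between the strong and ordinary pseudorandomness bounds. For (P1), the slack is $\tfrac{1}{2}\sqrt{np(1-p)}|S|$ and $e_F(S) < |S|$, so it suffices that $\sqrt{np(1-p)}\ge 2$. For (P2), taking $|S|\le|T|$ without loss of generality gives $e_F(S,T)\le|T|$; in case (i) this must fit in slack $\tfrac{1}{2}(|S|+|T|)\log n$, which requires only $\log n\ge 2$, while in case (ii) the slack is $|S||T|p$ and the case hypothesis $\left(\tfrac{1}{|S|}+\tfrac{1}{|T|}\right)\tfrac{\log n}{p}\le \tfrac{7}{2}$ yields $|S|p\ge \tfrac{2\log n}{7}\gg 1$. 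Property (P3) is entirely analogous, with $e_F(S)<|S|$ in place of $e_F(S,T)\le|T|$, and the case-split hypothesis once again providing either $\log n \ge 2$ or $|S|p\gg 1$. There is no real obstacle here: the proof is a bookkeeping exercise confirming that $|F|$ is tiny compared to each quantity controlled by strong pseudorandomness, which is exactly what the hypothesis $|F| \le \sqrt{np\log n}/8$ and the lemma's mild assumptions on $p$ ensure.
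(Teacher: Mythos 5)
Your proposal is correct and follows essentially the same route as the paper's proof: both verify (P1)--(P6) property by property, noting that (P4) and (P6) are unaffected since $F$ avoids $y_1$, that (P5) follows from (SP5) and the bound on $|F|$, and that for (P1)--(P3) the at most $|S|$ (resp.\ $|T|$, after taking $|S|\le|T|$) edges of $F$ fit inside the slack between the strong and ordinary bounds, using $\sqrt{np(1-p)}\ge 2$ or $|S|p\ge\frac{2}{7}\log n>1$ exactly as the paper does. The bookkeeping is accurate throughout, so nothing further is needed.
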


\begin{proof}
Let $G':=G+F$.
Clearly, (SP4) and (SP6) are not affected by adding the edges of $F$, so $G'$ satisfies (P4) and (P6).
The bound on $|F|$ together with (SP5) immediately imply that $G'$ satisfies (P5). %

We now show that $G'$ satisfies (P1). 
Indeed, for any $S\subseteq V(G')$, (SP1) implies that
\begin{eqnarray*}
\left|e_{G'}(S)-p\binom{|S|}{2}\right| & \le & \left|e_{G'}(S)-e_{G}(S)\right|+\left|e_{G}(S)-p\binom{|S|}{2}\right|\\
 & \le & |S|+\frac{3}{2}\sqrt{np(1-p)}|S|
  \le  2\sqrt{np(1-p)}|S|.
\end{eqnarray*}

To check (P2), suppose that $S,T\subseteq V(G')$ are disjoint. Without loss of generality we may assume that
$|S|\le|T|$. First suppose $\left(\frac{1}{|S|}+\frac{1}{|T|}\right)\frac{\log n}{p}\ge\frac{7}{2}$.
Then (i) of (SP2) implies that
\[e_{G'}(S,T)\le e_{G}(S,T)+|T|\le\frac{3}{2}\left(|S|+|T|\right)\log n+|T|\le2\left(|S|+|T|\right)\log n,\]
as required. Now suppose that $\left(\frac{1}{|S|}+\frac{1}{|T|}\right)\frac{\log n}{p}\le\frac{7}{2}$.
Then (ii) of (SP2) implies that%
     \COMMENT{Indeed, we have:
$\frac{1}{|S|}\cdot\frac{\log n}{p}\le\frac{7}{2}\Rightarrow p|S|\ge\frac{2}{7}\log n>1$ as desired.}
\[e_{G'}(S,T)\le e_{G}(S,T)+|T|\le|T|\left(6p|S|+1\right)\le 7|S||T|p.\]
So (ii) of (P2) holds.
The proof that (P3) holds is essentially the same.%
   \COMMENT{Now suppose $S\subseteq V(G)$ - we will show that $G'$ satisfies the
relevant upper bound on $e(S)$. First suppose $\frac{\log n}{|S|p}\ge\frac{7}{4}$
- then we have by (SP4) $e_{G'}(S)\le e_{G}(S)+|S|\le\frac{3}{2}|S|\log n+|S|\le2|S|\log n$
as desired. Now suppose $\frac{\log n}{|S|p}\le\frac{7}{4}$. Then
we have by (SP4) $e_{G'}(S)\le e_{G}(S)+|S|\le|S|\left(3p|S|+1\right)$
and so we must show $1\le\frac{p|S|}{2}$. But we have
$\frac{1}{|S|}\cdot\frac{\log n}{p}\le\frac{7}{4}\Rightarrow p|S|\ge\frac{4}{7}\log n>2$
as desired.}
\end{proof}

We say that a graph $G$ on $n$ vertices is \emph{$u$-downjumping} if it has
a unique vertex $x_{0}$ of maximum degree, and $d(x_{0})\ge d(x)+u$
for all $x\ne x_{0}$.
The following result follows from Lemma~17 in~\cite{Knox2011e} by considering complements.
The latter lemma in turn follows easily from Theorem~3.15 in~\cite{B84}.

\begin{lem}\label{lem:downjumping}
Let $G\sim G_{n,p}$ with $p,1-p=\omega\left(\log n/n\right)$.
Then a.a.s.~$G$ is $5\frac{\sqrt{np(1-p)}}{\log n}$-downjumping.
\end{lem}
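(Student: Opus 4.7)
\medskip
\noindent\textbf{Proof proposal.} The plan is to reduce Lemma~\ref{lem:downjumping} to the corresponding statement for the \emph{minimum} degree (Lemma~17 in~\cite{Knox2011e}) by passing to the complement. Recall that if $G\sim G_{n,p}$ then $\overline G\sim G_{n,1-p}$, and $d_G(v)+d_{\overline G}(v)=n-1$ for every vertex $v$; so maximising $d_G$ corresponds exactly to minimising $d_{\overline G}$.

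The first step is to record the dictionary between $G$ and $\overline G$. The vertex $x_0$ of maximum degree in $G$ is precisely the vertex of minimum degree in $\overline G$, and the ``gap'' $d_G(x_0)-d_G(x)$ equals $d_{\overline G}(x)-d_{\overline G}(x_0)$ for every $x$. Consequently $G$ is $u$-downjumping if and only if $\overline G$ has a unique vertex $x_0$ of minimum degree and $d_{\overline G}(x)\ge \delta(\overline G)+u$ for every $x\ne x_0$.

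The second step is to apply Lemma~17 of~\cite{Knox2011e} to $\overline G\sim G_{n,1-p}$. Since the hypothesis $p,1-p=\omega(\log n/n)$ and the bound $5\sqrt{np(1-p)}/\log n$ are both symmetric under the substitution $p\leftrightarrow 1-p$, that lemma applies with $p$ replaced by $1-p$ and delivers the required upjumping property for $\overline G$ a.a.s. Translating back via the dictionary above gives the desired downjumping property for $G$.

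The main content therefore lies in Lemma~17 itself, which rests on the classical sharp asymptotics for the minimum degree of $G_{n,p}$ (Theorem~3.15 of~\cite{B84}). Since a vertex degree is $\mathrm{Bin}(n-1,p)$-distributed, those asymptotics imply that the typical spacing between the two smallest degrees is of order $\sqrt{np(1-p)/\log n}$, which exceeds the required $5\sqrt{np(1-p)}/\log n$ by a factor of $\sqrt{\log n}$; so there is ample slack and no essential obstacle once the complementation has been set up and one has checked that $1-p$ lies in the range to which Lemma~17 of~\cite{Knox2011e} applies.
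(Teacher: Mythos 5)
Your proposal is correct and follows exactly the route the paper takes: the paper also derives this lemma from Lemma~17 of~\cite{Knox2011e} by passing to the complement $\overline G\sim G_{n,1-p}$, with the latter lemma resting on Theorem~3.15 of~\cite{B84}. The complementation dictionary and the check that $1-p=\omega(\log n/n)$ puts $\overline G$ in the range of Lemma~17 are precisely the (brief) content of the paper's argument.
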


The next result is intuitively obvious, but due to possible correlations between vertex degrees, it does merit some justification.

\begin{lem}\label{lem:maxmin}
Suppose that $\log^2 n/n < p' \le p  \le 1- \log^2 n/n$, that $p'\le 1/2$ and that $G\sim G_{n,p}$.
Let $H$ be a random subgraph of $G$ obtained by including each edge of $G$ into $H$ with probability $p'/p$. 
Then a.a.s.~$G$ contains a unique vertex $x_0$ of maximum degree and $x_0$ does not have minimum degree in $H$.
\end{lem}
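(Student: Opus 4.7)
The idea is that, writing $x_0$ for the vertex of maximum degree in $G$, the conditional law of $d_H(x_0)$ given $G$ is binomial with mean very close to $np'$, whereas the minimum degree of $H \sim G_{n,p'}$ is a.a.s.~around $np' - \Theta(\sqrt{np' \log n})$. So $d_H(x_0)$ should comfortably exceed $\delta(H)$; the difficulty lies entirely in handling the potential correlation between ``$x_0$ is the max-degree vertex of $G$'' and ``$x_0$ is the min-degree vertex of $H$''.

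First I would invoke Lemma~\ref{lem:downjumping}, applicable because $p,1-p=\omega(\log n/n)$, to conclude that a.a.s.~$G$ has a unique vertex $x_0$ of maximum degree, which handles the first half of the claim. Since $\Delta(G)$ is at least the average degree of $G$, which concentrates at $(1+o(1))(n-1)p$ by Lemma~\ref{lem:chernoff} applied to $e(G)$, a.a.s.~$d_G(x_0) \ge np - O(1)$. I would also note that by two-round exposure $H \sim G_{n,p'}$ unconditionally, and since $p' > \log^2 n/n$, standard results on the minimum degree of binomial random graphs give $\delta(H) \le np' - c\sqrt{np' \log n}$ a.a.s.~for some positive constant $c$ (for $p' \le 1/2$ any $c^2 < 2(1-p')$ works).

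For the main step I would bound the bad event by
$$\{d_H(x_0) \le \delta(H)\} \subseteq \{d_H(x_0) \le np' - c\sqrt{np' \log n}\} \cup \{\delta(H) > np' - c\sqrt{np' \log n}\}.$$
The second event has probability $o(1)$ by the preceding paragraph. For the first, I would condition on $G$ on the a.a.s.~event $\{np - O(1) \le d_G(x_0) \le np + O(\sqrt{np \log n})\}$; given $G$, the edges of $H$ are independent $\textnormal{Bernoulli}(p'/p)$ retentions of the edges of $G$, so $d_H(x_0) \sim \textnormal{Bin}(d_G(x_0), p'/p)$ has mean $\mu$ satisfying $np' - O(1) \le \mu \le 2np'$. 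Lemma~\ref{lem:chernoff} then yields
$$\P\bigl[d_H(x_0) \le np' - c\sqrt{np' \log n} \,\big|\, G\bigr] \le \exp\bigl(-\Omega(c^2 \log n)\bigr) = o(1),$$
and taking expectation over $G$ completes the argument.

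The main obstacle is the correlation issue flagged in the lemma statement: one cannot simply invoke unconditional concentration for $d_H(x_0)$, because the identity of $x_0$ is determined by $G$, which is not independent of $H$. The decomposition above sidesteps this by splitting the bad event into two pieces, one controlled purely by the marginal law of $H$ and the other controlled purely by the conditional law of $d_H(x_0)$ given $G$, so that the correlation is never used.
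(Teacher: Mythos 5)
Your argument is correct and follows essentially the same route as the paper: the paper likewise splits the bad event at a threshold $f$, bounds $\P(\delta(H)>f)$ via the unconditional law $H\sim G_{n,p'}$ (citing Bollob\'as's Corollary~3.13), and bounds $\P(d_H(x_0)\le f)$ by conditioning on $G$ restricted to a typical event and applying the Chernoff bound to $\mathrm{Bin}(d_G(x_0),p'/p)$. The only real difference is the threshold: the paper takes $f=np'-\sqrt{np'\log\log n}$, which asks much less of the minimum-degree result (at the price of a weaker but still vanishing Chernoff bound $e^{-\log\log n/3}$), whereas your choice $f=np'-c\sqrt{np'\log n}$ with $c^2<2(1-p')$ leans on the sharp first-order asymptotics of $\delta(G_{n,p'})$ holding all the way down to $np'$ just above $\log^2 n$, which deserves a check of the cited reference in that regime.
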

\proof
Fix any $\varepsilon>0$.
Let $A$ be the event that $G$ contains a unique vertex $x_0$ of maximum degree and that $d_H(x_0)=\delta(H)$.
Let $f:=np'- \sqrt{np'\log \log n }$. Let $B$ be the event that $\delta(H) \le f$.
Note that $H \sim G_{n,p'}$. So Corollary 3.13 of~\cite{Bollobasbook} implies that $\P(\overline{B}) \le \eps$.
Let $C$ be the event that $G$ contains a unique vertex $x_0$ of maximum degree and that $d_H(x_0) \le f$ and note that $A \cap B \subseteq C$. 
Note also that $\P(A) \le \P(A \cap B )+ \P(\overline{B}) \le \P(C)+ \eps$. 
We say that a graph $F$ on $n$ vertices is \emph{typical} if $\Delta(F) \ge np$ and there is a unique vertex
of degree $\Delta(F)$. Now let $D$ be the event that $G$ is typical. Then Corollary 3.13 of~\cite{Bollobasbook} and
Lemma~\ref{lem:downjumping} together imply that
$\P(\overline{D}) \le \eps$. For any fixed graph $F$ on $n$ vertices, let $E_F$ denote the event that $G=F$.
Then $\P(C)\le \eps+ \sum_{F \colon F \ {\rm typical}} \P(C \mid E_F) \P(E_F)$.
Suppose that $E_F$ holds, where $F$ is typical. Let $N:=d_G(x_0)$ (note that $E_F$ determines $N$ and $x_0$).
Whether the event $C$ holds is now determined by a sequence of $N$ Bernoulli trials, each with success probability $p'/p$.
So let $X \sim \mbox{Bin}(N,p'/p)$. Then
$\E(X)=N(p'/p) \ge p'n$, which implies that%
    \COMMENT{This holds since the function $x-\sqrt{x\log\log n}$ is monotone increasing if $x\ge \log \log n/4$.}
$f \le \E(X)(1-\sqrt{\log \log n/\E(X)})$.   
Then an application of Lemma~\ref{lem:chernoff} gives us 
$$
\P(C \mid E_F)=\P(X \le f) \le e^{-\log \log n /3} \le \eps.
$$
So $\P(C) \le 2\eps$, which in turn implies that $\P(A) \le 3\eps$. Since $\eps$ was arbitrary, this implies the result.
\endproof 

Hefetz, Krivelevich and Szab\'o~\cite{HKS} proved a criterion for Hamiltonicity which requires only a rather weak quasirandomness notion.
We will use a special case of their Theorem~1.2 in~\cite{HKS}.
In that theorem, given a set $S$ of vertices in a graph $G$, we let $N(S)$ denote the external neighbourhood of $S$,
i.e.~the set of all those vertices $x\notin S$ for which there is some vertex $y\in S$ with $xy\in E(G)$.
Also, we say that $G$ is \emph{Hamilton-connected} if for any pair $x,y$ of distinct vertices
there is a Hamilton path with endpoints $x$ and $y$.
\begin{thm}
Suppose that $G$ is a graph on $n$ vertices which satisfies the following:
\begin{itemize}
\item[(HP1)] For every $S \subseteq V(G)$ with $|S| \le n/\sqrt{\log n}$, we have $|N(S)| \ge 20 |S|$.
\item[(HP2)] $G$ contains at least one edge between any two disjoint subsets $A,B \subseteq V(G)$ with $|A|,|B| \ge n/\log n$.%
\COMMENT{so this is the theorem with $d =20$ and where we bound the logs to make the statement cleaner.}
\end{itemize}
Then $G$ is Hamilton-connected.
\end{thm}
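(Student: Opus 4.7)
The plan is to apply the Pósa rotation-extension technique. Fix two distinct vertices $x, y \in V(G)$ and seek a Hamilton $xy$-path. A convenient reduction is to pass to the auxiliary graph $G^{*} := G + z$, where $z$ is a new vertex adjacent only to $x$ and $y$: any Hamilton cycle in $G^{*}$ is forced to use both edges $xz$ and $yz$ and so corresponds to a Hamilton $xy$-path in $G$. Both (HP1) and (HP2) persist for $G^{*}$ with essentially the same constants, since adding a single vertex of degree $2$ perturbs neighbourhood sizes by at most $1$, and the comfortable factor of $20$ in (HP1) easily absorbs this error. So it suffices to find a Hamilton cycle in $G^{*}$.

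Let $P$ be a longest path in $G^{*}$; the aim is to show $|V(P)| = n+1$ and then close $P$. Fix one endpoint of $P$ and perform all possible Pósa rotations at the other; let $R$ denote the set of vertices reachable as the rotated free endpoint. Maximality of $P$ forces $N_{G^{*}}(R) \subseteq V(P)$ (otherwise $P$ extends), and the standard structural analysis of rotations yields $|N_{G^{*}}(R)| \le 3|R|$, since each external neighbour of $R$ on $P$ corresponds to only a bounded number of rotation-induced ``breakpoints''. Combined with (HP1), this forces $|R| > n/\sqrt{\log n}$, for otherwise (HP1) would give $|N_{G^{*}}(R)| \ge 20|R|$, contradicting the Pósa bound.

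Next, perform a second round of rotations, now at the originally fixed end, anchored at various $w \in R$; this produces for each such $w$ a set $R_{w}$ of possible partners, again of size exceeding $n/\sqrt{\log n} \ge n/\log n$. Applying (HP2) to $R$ and a suitable $R_{w}$ yields an edge whose insertion into the appropriate rotated form of $P$ acts as a \emph{booster}: it either closes the path into a Hamilton cycle of $G^{*}$ or creates a strictly longer path, contradicting maximality. Iterating the extension step finitely many times produces the required Hamilton cycle in $G^{*}$ and hence the Hamilton $xy$-path in $G$.

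The main obstacle is the careful bookkeeping inside the Pósa lemma: one must verify that rotations at one endpoint can be composed freely without breaking the anchored status of the other endpoint, and that the structural bound $|N(R)| \le 3|R|$ is robust to the tiny perturbation from adjoining $z$. A secondary subtlety is that the booster edge must be inserted in a way that preserves the property of being a path with the required fixed endpoints, which is exactly why the auxiliary vertex $z$ (of degree $2$ in $G^{*}$) is so convenient: it automatically fixes both endpoints of any Hamilton cycle of $G^{*}$ at $x$ and $y$.
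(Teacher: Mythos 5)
First, a point of reference: the paper does not prove this statement at all --- it is quoted as a special case of Theorem~1.2 of Hefetz, Krivelevich and Szab\'o \cite{HKS}, so there is no in-paper proof to compare against. Your outline follows the rotation--extension philosophy that does underlie their result, but as written it contains a genuine gap at the decisive ``booster'' step, together with a smaller inaccuracy in the reduction to $G^*$.

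The gap: the booster pairs $\{w,u\}$ with $w\in R$ and $u\in R_w$ form a \emph{union of stars} centred at the vertices of $R$, not a complete bipartite graph between two large disjoint sets. Applying (HP2) to $R$ and $R_w$ (for one fixed $w$) only produces an edge $w'u$ with $w'\in R$ and $u\in R_w$ for \emph{some} $w'$; unless $w'=w$ (or one separately knows $u\in R_{w'}$), there is no longest path of which $w'$ and $u$ are the two endpoints, so inserting that edge need not close a cycle or lengthen the path. Nor can one escape by intersecting the sets $R_w$: sets of size $n/\sqrt{\log n}$ may pairwise meet in fewer than $n/\log n$ vertices, so (HP2) does not apply to a common core. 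Extracting a booster from a deterministic ``edge between any two large disjoint sets'' hypothesis is precisely the nontrivial content of the Hefetz--Krivelevich--Szab\'o argument, and your sketch dispatches it in one sentence. (The two-disjoint-large-sets condition \emph{does} work directly when merging two vertex-disjoint paths, since there every crossing edge between the two endpoint sets is usable; that is not the situation here.)

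The smaller issue: (HP1) does not simply ``persist'' for $G^*=G+z$. The set $S=\{z\}$ has $|N_{G^*}(S)|=2<20$, and sets containing $z$ only satisfy $|N_{G^*}(S)|\ge 20|S|-21$. This is repairable --- the degraded bound still exceeds the P\'osa bound $3|S|$ once $|S|\ge 2$, and the degenerate case $R=\{z\}$ is excluded by hand because $z$'s two neighbours $x,y$ force either an extension or a rotation --- but it must be argued; one cannot literally invoke (HP1) for subsets of $V(G^*)$. If you want a complete proof, either supply the missing booster mechanism or simply cite Theorem~1.2 of \cite{HKS}, as the paper does.
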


\begin{thm}\label{lem:hamilton-connected}
Let $G \sim G_{n,p}$ with $\log ^8 n /n \le p \le 1-n^{-1/3}$, and let $x_{0}$ be a vertex of maximum degree in $G$.
Then a.a.s.~$G-x_{0}$ is Hamilton-connected.
\end{thm}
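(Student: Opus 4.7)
The plan is to apply the Hefetz--Krivelevich--Szab\'o criterion stated immediately above to the graph $G-x_0$, by verifying both expansion conditions (HP1) and (HP2). Throughout, I would assume via Lemma~\ref{lem:rgsarepr} (whose hypotheses are easily met in our range of $p$) that $G$ is strongly $p$-pseudorandom, so all of (P1)--(P6) hold. Since $x_0$ is a single fixed vertex, deleting it perturbs all relevant quantities by only additive $O(1)$ terms that are absorbed by the slack in the pseudorandomness bounds.

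For (HP2), I would fix disjoint $A, B \subseteq V(G-x_0)$ with $|A|, |B| \ge (n-1)/\log(n-1)$ and use the jumbledness estimate~(\ref{eq:jumbled}) with $\beta = 2\sqrt{np(1-p)}$ to obtain
\[
e_{G-x_0}(A,B) \;\ge\; p|A||B| - 4\sqrt{np(1-p)}(|A|+|B|) \;\ge\; \frac{pn^{2}}{2\log^{2}n} - 8n\sqrt{np}.
\]
The hypothesis $p\ge\log^{8}n/n$ gives $np\ge\log^{8}n$, so the first term dominates by a polylogarithmic factor and the bound is strictly positive, yielding (HP2).

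For (HP1), I would argue by contradiction. Suppose $S\subseteq V(G-x_0)$ satisfies $|S|\le(n-1)/\sqrt{\log(n-1)}$ but $|N_{G-x_0}(S)|<20|S|$. Let $U := S\cup N_G(S)$, so $|U|\le 21|S|+1\le 22|S|$. Every $G$-neighbour of each vertex of $S$ lies in $U$, and combined with (P4) this gives
\[
|S|\bigl(np-2\sqrt{np\log n}\bigr) \;\le\; \sum_{v\in S}d_G(v) \;\le\; 2e_G(U).
\]
I then split according to (P3). If $|U|p<4\log n/7$, then (P3)(i) gives $e_G(U)\le 2|U|\log n\le 44|S|\log n$, forcing $np\le 88\log n + 2\sqrt{np\log n}$ -- incompatible with $np\ge\log^{8}n$. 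Otherwise (P3)(ii) gives $e_G(U)\le \tfrac{7}{2}|U|^{2}p\le 1694|S|^{2}p$, whence $|S|\ge n/4000$ (using $2\sqrt{n\log n/p}=o(n)$); but this contradicts $|S|\le(n-1)/\sqrt{\log(n-1)}$ once $n$ is large enough that $\sqrt{\log n}>4000$.

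The main obstacle is really just bookkeeping: carefully matching the thresholds in (P3)--(P4) against the two subcases of (HP1), and verifying that the $O(1)$ perturbations from removing $x_0$ (and from a possible extra vertex in $N_G(S)\setminus N_{G-x_0}(S)$) do not disturb the constants. The lower bound $p\ge\log^{8}n/n$ is used only to make $np$ dominate the polylogarithmic error terms in (P4) and in (P3)(i); with this in hand the Hefetz--Krivelevich--Szab\'o criterion immediately yields that $G-x_0$ is a.a.s. Hamilton-connected.
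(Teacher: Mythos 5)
Your proposal is correct and follows the same route as the paper: both apply the Hefetz--Krivelevich--Szab\'o criterion to $G-x_0$ and verify (HP1) and (HP2) from the pseudorandomness of $G$ (which holds by Lemma~\ref{lem:rgsarepr}). The only difference is that the paper delegates the two verifications to Corollary~37 of~\cite{Knox2011e} and Theorem~2.11 of~\cite{Bollobasbook}, whereas you carry out the (correct) computations directly from (P1), (P3) and (P4), with the $O(1)$ losses from deleting $x_0$ handled as you indicate.
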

\proof
It suffices to check that $G-{x_0}$ satisfies (HP1) and (HP2).
For $p$ in the above range, these properties are well known to hold a.a.s.~for $G$ with room to spare and so also hold for $G-{x_0}$. 
For completeness we point out explicit references.
To check (HP1), first note that Lemma~\ref{lem:rgsarepr} implies that $G$ is $p$-pseudorandom.
So Corollary~37 of~\cite{Knox2011e} applied with $A_x:=N_G(x) \setminus \{x_0\}$ now implies that (HP1) holds.
(HP2) is a special case of Theorem~2.11 in~\cite{Bollobasbook} -- the latter guarantees a.a.s.~the existence of many edges between
$A$ and $B$.%
     \COMMENT{When using (i) in Corollary 36, note that $\sum_{x \in S} |A_x|\ge s(np/2)\ge s\log^8 n/2$ and so
$|N(S)|=t\ge \sum_{x \in S} |A_x|/4\log n\ge s\log^6 n$.
When using~(ii), note that $\sum_{x \in S} |A_x|/7sp \ge s (np/2)/7sp =n/14$.
The results in HKS checking HP1 and HP2 do not seem to be formally applicable in our setting.}
\endproof


\section{Extending graphs into regular graphs}\label{sec:tutte}

The aim of this section is to show that whenever $H$ is a graph which satisfies certain conditions and $G$ is
a $p$-pseudorandom graph on the same vertex set which is edge-disjoint from $H$, then $G$ contains a spanning subgraph $H'$
whose degree sequence complements that of $H$, i.e.~such that $H\cup H'$ is $\Delta(H)$-regular.
The conditions on $H$ that we need are the following:
\begin{itemize}
\item $H$ has even maximum degree.
\item $H$ is $\sqrt{np}$-downjumping.
\item $H$ satisfies $\Delta(H)-\delta(H)\le (np\log n)^{5/7}$.
\end{itemize}
In order to show this we will use Tutte's $f$-factor theorem, for which we need to introduce the following notation.
Given a graph $G=(V,E)$ and a function $f:V\rightarrow\N\cup \{0\}$,
an \emph{$f$-factor} of $G$ is a subgraph $G'$ of $G$ such
that $d_{G'}(v)=f(v)$ for all $v\in V$. Our approach will then be to
set $f(v):=\Delta(H)-d_{H}(v)$ and attempt to find an $f$-factor
in the pseudorandom graph $G$. The following result of Tutte~\cite{Tutte1, Tutte2}
gives a necessary and sufficient condition for a graph to contain
an $f$-factor.

\begin{thm}\label{thm:Tutte}
A graph $G=(V,E)$ has an $f$-factor if and only
if for every two disjoint subsets $X,Y\subseteq V$, there are at most
\[\sum_{x\in X}f(x)+\sum_{y\in Y}(d(y)-f(y))-e(X,Y)\]
connected components $K$ of $G-X-Y$ such that
\[\sum_{x\in K}f(x)+e(K,Y)\]
is odd.
\end{thm}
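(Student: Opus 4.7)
The statement is the classical Tutte $f$-factor theorem; my plan is to reduce it to Tutte's $1$-factor (perfect matching) theorem via a gadget construction. Given $G=(V,E)$ and $f$, I build an auxiliary graph $H^{\star}$ as follows: for each $v\in V$ introduce a set $D_{v}=\{e^{v}:e\in E,\,v\in e\}$ of $d(v)$ `edge-copies' and a set $P_{v}$ of $d(v)-f(v)$ `pads', and make $H^{\star}[D_{v}\cup P_{v}]$ the complete bipartite graph $K_{d(v),\,d(v)-f(v)}$; for each edge $e=uv\in E$, also add to $H^{\star}$ the single `cross-edge' $e^{u}e^{v}$.

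The first step is to observe that $G$ has an $f$-factor iff $H^{\star}$ has a perfect matching. An $f$-factor $F$ is encoded by choosing the cross-edges $e^{u}e^{v}$ for $uv\in F$, after which $d(v)-f(v)$ unmatched vertices remain in each $D_{v}$ and can be saturated into $P_{v}$ by completeness of the gadget; conversely, the cross-edges used by any perfect matching of $H^{\star}$ form an $f$-factor. I would then apply the classical Tutte $1$-factor theorem to $H^{\star}$ and translate the resulting condition $q(H^{\star}-T)\le|T|$ via a local exchange argument inside each gadget $B_{v}=D_{v}\cup P_{v}$: since pads are interchangeable among themselves and edge-copies are interchangeable among themselves, an optimal barrier $T$ (maximising the deficit $q(H^{\star}-T)-|T|$) may be chosen so that $T\cap B_{v}\in\{\emptyset,\,D_{v},\,P_{v}\}$. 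This reduces attention to barriers of the form $T_{X,Y}=\bigcup_{x\in X}D_{x}\cup\bigcup_{y\in Y}P_{y}$ for disjoint $X,Y\subseteq V$.

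For such $T_{X,Y}$ a direct computation gives $|T_{X,Y}|=\sum_{x\in X}d(x)+\sum_{y\in Y}(d(y)-f(y))$, while $H^{\star}-T_{X,Y}$ decomposes into: $\sum_{x\in X}(d(x)-f(x))$ isolated pads from $X$-gadgets; $e_{G}(X,Y)$ isolated edge-copies from $Y$-gadgets whose cross-partners in $X$-gadgets were removed; size-$2$ components from cross-edges within $Y$ (even, hence not counted); and one component $C_{K}$ per connected component $K$ of $G-X-Y$, with $|C_{K}|=\sum_{v\in K}(2d(v)-f(v))+e_{G}(K,Y)$ and hence of the same parity as $\sum_{v\in K}f(v)+e_{G}(K,Y)$. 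Substituting into $q(H^{\star}-T_{X,Y})\le|T_{X,Y}|$ and rearranging yields exactly the stated bound $\sum_{x\in X}f(x)+\sum_{y\in Y}(d(y)-f(y))-e_{G}(X,Y)$. The main obstacle is the exchange step: making rigorous the reduction to the three canonical intersections of $T$ with each gadget, which requires careful casework exploiting the completeness of the bipartite gadgets; the subsequent parity bookkeeping is then a direct calculation.
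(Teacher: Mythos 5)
The paper does not prove this statement at all: it is quoted as Tutte's $f$-factor theorem and attributed to \cite{Tutte1,Tutte2}, so there is no in-paper argument to compare against. Your proposal is, in substance, exactly the argument of the cited short proof \cite{Tutte2}: reduce to the $1$-factor theorem via the complete bipartite gadgets $K_{d(v),\,d(v)-f(v)}$, restrict to canonical barriers $T_{X,Y}$ by an exchange argument, and do the parity bookkeeping. The equivalence with perfect matchings of $H^{\star}$ and the component count for $T_{X,Y}$ (in particular the parity of $|C_K|$) are correct as you state them. Two points deserve the care you already flag: (1) the exchange step reducing a general barrier $T$ to one with $T\cap B_v\in\{\emptyset,D_v,P_v\}$ is where all the work lies and needs the completeness of the gadget plus a deficit-monotonicity argument; (2) in the degenerate cases $f(v)=d(v)$ (so $P_v=\emptyset$) or $d(v)=f(v)=0$, the gadget $B_v$ is an independent set and the set $C_K$ need not be a single component of $H^{\star}-T_{X,Y}$, so the claimed one-to-one correspondence between components $K$ of $G-X-Y$ and components $C_K$ requires a separate (routine but necessary) accounting. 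With those two points handled, the proof is complete and is the standard one.
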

When applying this result, we will often bound the number of components $K$ of $G-X-Y$ for which
$\sum_{x\in K}f(x)+e(K,Y)$ is odd by the total number of components of $G-X-Y$.
The next lemma (which is a special case of Lemma~20 in~\cite{Knox2011e}) implies that there are at most $|X|+|Y|$ such components.

\begin{lem}\label{lem:reghelper}
Let $G=(V,E)$ be a $p$-pseudorandom graph on $n$ vertices with $pn\ge \log n$. Then for any nonempty
$B\subseteq V$, the number of components of $G[V\setminus B]$ is at most $|B|$. In particular, $G$ is connected.
\end{lem}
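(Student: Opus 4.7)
My plan is to order the components of $G[V\setminus B]$ as $|C_1|\le\dots\le|C_k|$ and use (\ref{eq:jumbled}) to show they split into one ``giant'' $C_k$ plus a tiny remainder, after which I would match the components injectively into $B$ via Hall's theorem.

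\emph{Structural step.} For each $1\le i\le k-1$, set $X_i:=C_1\cup\dots\cup C_i$ and $Y_i:=C_{i+1}\cup\dots\cup C_k$. These are disjoint unions of distinct components of $G[V\setminus B]$, so $e_G(X_i,Y_i)=0$. Applying (\ref{eq:jumbled}) with $\beta=2\sqrt{np(1-p)}$ yields
\[
p|X_i||Y_i|\le 4\sqrt{np(1-p)}(|X_i|+|Y_i|),
\]
and since $|X_i|+|Y_i|\le 2\max(|X_i|,|Y_i|)$ this gives $\min(|X_i|,|Y_i|)\le 8\sqrt{n/p}$. Because $|X_i|$ is non-decreasing and $|Y_i|$ non-increasing in $i$, there is a threshold index at which the partial sums switch from bounded by $8\sqrt{n/p}$ to having complement bounded by $8\sqrt{n/p}$; the component straddling this threshold has size at least $|V\setminus B|-16\sqrt{n/p}$, and the monotonicity of the sizes forces this giant to be $C_k$. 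Hence in the substantive case $|V\setminus B|>24\sqrt{n/p}$ there is a unique giant $C_k$ and $\sum_{i<k}|C_i|\le 8\sqrt{n/p}$; the opposite case $|V\setminus B|\le 24\sqrt{n/p}$ forces $|B|\ge n-24\sqrt{n/p}\ge k$ directly (using $pn\ge\log n$ to absorb the error term).

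\emph{Matching step.} To promote ``$k-1$ small components'' to the exact bound $k\le|B|$, I would build the auxiliary bipartite graph $H$ between $\{C_1,\dots,C_k\}$ and $B$ with $C_i\sim v$ iff $v$ has a $G$-neighbour in $C_i$, and verify Hall's condition. For $\mathcal I\subseteq\{1,\dots,k\}$ with $T:=\bigcup_{i\in\mathcal I}C_i$, the set $N_G(T)\setminus T$ lies inside $B$ (since $T$ is a union of components of $G[V\setminus B]$), so Hall's condition amounts to $|N_G(T)\setminus T|\ge|\mathcal I|$. This lower bound should come from pseudorandom expansion applied either to $T$ or, if $|T|>|V\setminus B|/2$, to the complementary union of components. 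A matching saturating $\{C_1,\dots,C_k\}$ then yields $k\le|B|$; the ``in particular'' statement about connectedness follows by taking $|B|=1$.

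\emph{Main obstacle.} The quantitatively delicate step is verifying Hall's condition at the borderline scale $|T|=O(\sqrt{n/p})$: at this scale (\ref{eq:jumbled}) degenerates and only shows $|N_G(T)\setminus T|\ge 0$, so one must combine it with the finer estimates in (P2) and (P3) and with degree control from (P4), taking care that $\delta(G)\ge np-2\sqrt{np\log n}$ dominates $\sqrt{n/p}$ in the relevant range. This subtle combinatorics is precisely what Lemma~20 of \cite{Knox2011e} sets up, and our lemma is extracted as a special case, so in practice I would verify the hypotheses of that lemma rather than replicate its proof.
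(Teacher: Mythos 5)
The paper gives no proof of this lemma: it is imported verbatim as a special case of Lemma~20 of \cite{Knox2011e}, so your closing sentence (``verify the hypotheses of that lemma rather than replicate its proof'') lands exactly where the paper does, and as a citation your answer matches. What deserves scrutiny is the self-contained sketch, since that is the bulk of your submission. The structural step is sound: $e_G(X_i,Y_i)=0$ plus (\ref{eq:jumbled}) does force $\min(|X_i|,|Y_i|)\le 8\sqrt{n/p}$, and your threshold argument correctly isolates a giant $C_k$ with $\sum_{i<k}|C_i|\le 8\sqrt{n/p}$. Note also that Hall's theorem is more machinery than you need: the single instance $\mathcal I=[k]$ of your condition, namely $|N_G(V\setminus B)\cap B|\ge k$, already yields $k\le|B|$ since $N_G(V\setminus B)\setminus(V\setminus B)\subseteq B$. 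But this does not make the estimate any easier, and the estimate is precisely where your argument is missing its decisive step.

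The gap you flag is real and is the whole content of the lemma, and it is worth seeing why it cannot be closed from the stated hypotheses by the route you indicate. The binding case is a union $T$ of small components, each possibly a singleton, so that $|\mathcal I|=|T|=:t$ and you must prove $|N_G(T)\setminus T|\ge t$. Set $S:=N_G(T)\setminus T$ and suppose $|S|<t$. Since every $G$-neighbour of $T$ lies in $T\cup S$, we get $t\,\delta(G)\le\sum_{v\in T}d_G(v)\le 2e_G(T)+e_G(T,S)$, and for small $t$ the applicable clauses (P3)(i) and (P2)(i) bound the right-hand side by roughly $8t\log n$; hence the contradiction requires $\delta(G)>8\log n$. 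But (P4) only guarantees $\delta(G)\ge np-2\sqrt{np\log n}$, which exceeds $8\log n$ only when $np>16\log n$ and is in fact non-positive for $np\le 4\log n$ --- so under the bare hypothesis $pn\ge\log n$ none of (P1)--(P6) hands you the expansion you need for, say, a single low-degree vertex forming its own component ((P6) rescues only the case $|T|\ge\log^2 n$). In every application in this paper one has $pn\ge\log^{21}n$, so the issue is invisible downstream, but as a proof of the lemma as stated your sketch does not close, and the honest conclusion is the one you reach: defer to Lemma~20 of \cite{Knox2011e}.
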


The following lemma guarantees an $f$-factor in a pseudorandom graph, as long as $\sum_{v\in V}f(v)$ is even, $f(v)$ is not too large and for
all but at most one vertex $f(v)$ is not too small either. (Clearly, the requirement that $\sum_{v\in V}f(v)$ is even is necessary.)%
    \COMMENT{Before we had that $\sqrt{np}\le f(v)\le(np\log n)^{\frac{5}{7}}$ holds where the lower bound might fail for a bounded number $C$ of vertices.
But the case when $|X|<C$ and $Y=\emptyset$ was missing. So I changed the lemma since we are only using it in the case when $C=1$ anyway.}

\begin{lem}\label{lem:ffactors}
Let $G=(V,E)$ be a $p$-pseudorandom graph on $n$ vertices with%
    \COMMENT{Before we also had $p\le 1/2$, but this seems not to be needed for the proof.}
$pn\ge \log^{21}n$, and let $f:V\rightarrow\N\cup \{0\}$
be a function such that $\sum_{v\in V}f(v)$ is even. Suppose that $G$ contains a vertex $x_0$ such that
$f(x_0)$ is even and such that
$$ f(x_0)\le (np\log n)^{\frac{5}{7}} \ \ \ \text{and} \ \  \ \sqrt{np}\le f(v)\le(np\log n)^{\frac{5}{7}} \ \ \text{for all} \ \ v\in V\setminus\{x_0\}.$$
Then $G$ has an $f$-factor.
\end{lem}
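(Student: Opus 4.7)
The plan is to apply Tutte's $f$-factor criterion (Theorem~\ref{thm:Tutte}). For disjoint $X, Y \subseteq V(G)$, let $q(X, Y)$ denote the number of components $K$ of $G - X - Y$ for which $\sum_{v \in K} f(v) + e(K, Y)$ is odd, and set $W := V \setminus (X \cup Y)$. The identity $\sum_{y \in Y} d(y) = 2e(Y) + e(X, Y) + e(Y, W)$ lets us rewrite the Tutte quantity as
\[\Phi(X, Y) := \sum_{x \in X} f(x) + \sum_{y \in Y}(d(y) - f(y)) - e(X, Y) = \sum_{x \in X} f(x) - \sum_{y \in Y} f(y) + 2 e(Y) + e(Y, W),\]
so my goal is $q(X, Y) \le \Phi(X, Y)$. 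Lemma~\ref{lem:reghelper} supplies the bound $q(X, Y) \le |X| + |Y|$ whenever $X \cup Y \ne \emptyset$, and one also has the trivial bound $q(X, Y) \le |W|$.

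I would first dispose of two genuinely degenerate cases. When $X = Y = \emptyset$ pseudorandomness makes $G$ connected, so there is a single component, whose parity sum $\sum_v f(v)$ is even by hypothesis, giving $q = 0$. When $X = \{x_0\}$, $Y = \emptyset$, the graph $G - x_0$ is still connected and its single component has parity sum $\sum_v f(v) - f(x_0)$, which is even since both summands are; so $q = 0$ again. In both cases $\Phi \ge 0$ trivially.

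In the substantive cases I would lower-bound the pieces of $\Phi$ using the hypotheses: $\sum_{x \in X} f(x) \ge \sqrt{np}\,(|X| - 1)$ (using $f(x) \ge \sqrt{np}$ on $V \setminus \{x_0\}$), $\sum_{y \in Y} f(y) \le (np\log n)^{5/7} |Y|$, and jumbledness~(P1) through~(\ref{eq:jumbled}) gives $e(Y) \ge p\binom{|Y|}{2} - 2\sqrt{np(1-p)}|Y|$ as well as $e(Y, W) \ge p|Y||W| - 4\sqrt{np(1-p)}(|Y| + |W|)$. I would then split on the size of $|X| + |Y|$. When $|X| + |Y| \le n/2$ I use $q \le |X| + |Y|$: the key positive contribution $e(Y, W) \ge p|Y||W|/2 \ge p|Y|n/4$ (valid once $|W|$ is large enough for the main term to beat the jumbledness error; otherwise I would apply the sharper (P2)(i) bound $e(X, Y) \le 2(|X|+|Y|)\log n$ directly to the original form of $\Phi$), together with $\sqrt{np}(|X| - 1)$, comfortably exceeds $|X| + |Y| + (np\log n)^{5/7}|Y|$ thanks to $\sqrt{np} \ge \log^{10} n$. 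When $|X| + |Y| > n/2$ I switch to $q \le |W| \le n/2$: here $\sqrt{np}(|X| - 1)$ is already of order $\sqrt{np}\,n$, and as long as $|Y|$ is not extremely large the $2e(Y)$ term, of order $p|Y|^2$ by jumbledness, absorbs the negative contribution $-(np\log n)^{5/7}|Y|$ with ample margin.

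The main technical obstacle I expect lies in the intermediate regime where $|Y|$ is of moderate size (roughly $1/p \ll |Y| \ll \sqrt{n/p}$) and neither jumbledness lower bound for $e(Y)$ or $e(Y, W)$ is safely dominated by its main term---here one has to balance the three sources of slack $\sum_x f(x)$, $2e(Y)$, and $e(Y, W)$ simultaneously, possibly invoking (P2)(ii) or (P3) for sharper error estimates. The generous assumption $pn \ge \log^{21} n$ is exactly what supplies the needed margin: its unusually high logarithmic exponent is precisely calibrated to absorb the competing $(np\log n)^{5/7}|Y|$ term coming from the largest allowed values of $f(y)$.
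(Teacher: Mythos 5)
Your skeleton (Tutte's theorem, the bound $q(X,Y)\le|X|+|Y|$ from Lemma~\ref{lem:reghelper}, and the parity argument when $X\cup Y$ is empty or $X=\{x_0\}$, $Y=\emptyset$) matches the paper's, and the identity $\Phi(X,Y)=\sum_{x\in X}f(x)-\sum_{y\in Y}f(y)+2e(Y)+e(Y,W)$ is correct. The genuine gap is in how you estimate the last two terms. Lower-bounding $e(Y,W)$ via jumbledness costs an error of $4\sqrt{np(1-p)}\,(|Y|+|W|)$, which is of order $\sqrt{np}\cdot n$ whenever $W$ is a constant fraction of $V$; the main term $p|Y||W|$ only beats this once $|Y|\gtrsim\sqrt{n/p}$. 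So for all $|Y|\lesssim\sqrt{n/p}$ your Case A estimate is not merely tight but outright false as written (the claimed positive contribution $e(Y,W)\ge p|Y|n/4$ is unavailable, and nothing else on the left-hand side is of order $\sqrt{np}\,n$), and your parenthetical fallback via (P2)(i) only covers $\min(|X|,|Y|)\lesssim\log n/p$. You correctly flag the regime $1/p\ll|Y|\ll\sqrt{n/p}$ as the obstacle, but the patches you suggest cannot close it: (P2)(ii) and (P3) are \emph{upper} bounds on edge counts, whereas what is missing is a \emph{lower} bound on $2e(Y)+e(Y,W)=\sum_{y\in Y}d(y)-e(X,Y)$. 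A second, smaller slip: $X=\{x\}$ with $x\ne x_0$ and $Y=\emptyset$ is covered neither by your degenerate cases nor by your general bounds (every positive term you list vanishes, yet $q$ may equal $1$); one needs $\Phi=f(x)\ge\sqrt{np}\ge1$ there.

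The paper's proof closes exactly the regime you identify by using the minimum-degree condition (P4) instead of jumbledness lower bounds: $\sum_{y\in Y}d(y)\ge(np-2\sqrt{np\log n})|Y|$ together with the jumbledness \emph{upper} bound $e(X,Y)\le p|X||Y|+4\sqrt{np}(|X|+|Y|)$ gives $\sum_{y\in Y}d(y)-e(X,Y)\ge p(n-|X|)|Y|-O(\sqrt{np\log n})(|X|+|Y|)$, with error proportional to $|X|+|Y|$ rather than $|W|$. The case split is then on the ratio $|Y|/|X|$ (threshold $\frac14(np)^{-3/14}\log^{-5/7}n$) rather than on $|X|+|Y|$ versus $n/2$: when $|Y|$ is small relative to $|X|$ one drops $\sum_{y\in Y}d(y)-e(X,Y)\ge0$ entirely and the term $\sum_{x\in X}f(x)\ge\sqrt{np}(|X|-1)$ alone dominates $\sum_{y\in Y}f(y)+|X|+|Y|$; when $|Y|$ is large relative to $|X|$ the degree term $p(n-|X|)|Y|$ (further split according to $|X|\lessgtr n/2$) absorbs both $(np\log n)^{5/7}|Y|$ and the error $5\sqrt{np}|X|$. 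Your argument can be repaired along these lines, but as proposed it does not go through.
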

\begin{proof}
Given two disjoint sets $X,Y\subseteq V$, we define $\alpha_{f}(X,Y)$
to be the number of connected components $K$ of $G-X-Y$ such that
\[\sum_{x\in K}f(x)+e(K,Y)\] is odd. We also define
\[\beta_{f}(X,Y):=\sum_{x\in X}f(x)+\sum_{y\in Y}\left(d(y)-f(y)\right)-e(X,Y).\]
By \prettyref{thm:Tutte}, it then suffices to prove that $\alpha_{f}(X,Y)\le\beta_{f}(X,Y)$. 

We will first show that $\alpha_{f}(X,Y)\le|X|+|Y|$. If either $X$ or $Y$
is nonempty, this follows immediately from \prettyref{lem:reghelper}.
If both $X$ and $Y$ are empty, then we must show that $\alpha_{f}(\emptyset,\emptyset)=0$. But
this holds since $G$ is connected by \prettyref{lem:reghelper},
and $\sum_{x\in V}f(x)$ is even by hypothesis. Hence $\alpha_{f}(X,Y)\le|X|+|Y|$
in all cases.

Hence if
\begin{equation}\label{eq:betaXY}
\beta_{f}(X,Y)\ge|X|+|Y|
\end{equation}
holds, then we have $\alpha_{f}(X,Y)\le\beta_{f}(X,Y)$ and we are done. If $X=Y=\emptyset$, (\ref{eq:betaXY}) holds. So it remains to consider the following cases.
\begin{caseenv}
\item $|X|= 1$.

\smallskip

Let $x$ denote the unique vertex in $X$. Suppose first that $Y=\emptyset$. 
In this case Lemma~\ref{lem:reghelper} implies that $G-x=G-X-Y$ is connected.
If $x=x_0$ then $\sum_{v\in V\setminus\{x\}} f(v)= \sum_{v\in V} f(v)-f(x)$ is even. Thus
$\alpha_{f}(X,Y)=0$ and so $\beta_{f}(X,Y)\ge \alpha_{f}(X,Y)$, as desired. If $x\neq x_0$
then $\beta_{f}(X,Y)=f(x)\ge \sqrt{np}\ge 1\ge \alpha_{f}(X,Y)$, as desired.

Thus we may assume that $Y\neq \emptyset$. Then
\begin{eqnarray*}
\beta_{f}(X,Y) & \ge  & \sum_{y\in Y}\left(d(y)-f(y)\right)-|X||Y|\\
& \stackrel{({\rm P4})}{\ge}  & \left(np-2\sqrt{np\log n}-(np\log n)^{\frac{5}{7}}\right)|Y|-|Y|\\
 & \ge  & \frac{np}{2}|Y|\ge |X|+|Y|
\end{eqnarray*}
and so~(\ref{eq:betaXY}) holds.
\item $|X|>1$ and $|Y|\le\frac{1}{4}|X|(np)^{-\frac{3}{14}}\log^{-\frac{5}{7}}n$.

\smallskip

Since $\sum_{y\in Y}d(y)\ge e(X,Y)$ it follows that in this case we have
\begin{align*}
\beta_{f}(X,Y) & \ge  \sum_{x\in X}f(x)-\sum_{y\in Y}f(y)
\ge (|X|-1)\sqrt{np}-|Y|(np\log n)^{\frac{5}{7}}\\
 & \ge  \frac{\sqrt{np}}{2}|X|-\frac{\sqrt{np}}{4}|X|\ge 2|X|\ge |X|+|Y|,
\end{align*}
and so~(\ref{eq:betaXY}) holds.
\item $1 < |X| \leq \frac{n}{2}$ and $|Y|>\frac{1}{4}|X|(np)^{-\frac{3}{14}}\log^{-\frac{5}{7}}n$.

\smallskip

It follows by (P1) and~(\ref{eq:jumbled}) that
\[e(X,Y)\le p|X||Y|+4\sqrt{np}(|X|+|Y|).\]
Thus
\begin{eqnarray}
\beta_{f}(X,Y)-\alpha_{f}(X,Y) & \ge & \sum_{y\in Y}\left(d(y)-f(y)\right)-e(X,Y)-|X|-|Y|\nonumber\\
 & \stackrel{({\rm P4})}{\ge} &  \left(np-2\sqrt{np\log n}-(np\log n)^{\frac{5}{7}}\right)|Y|-p|X||Y|-5\sqrt{np}(|X|+|Y|)\nonumber\\
 & \ge  & \left(p(n-|X|)-2(np\log n)^{\frac{5}{7}}\right)|Y|-5\sqrt{np}|X|\label{eq:beta2}\\
& \ge  & \left(\frac{np}{2}-2(np\log n)^{\frac{5}{7}}\right)|Y|-5\sqrt{np}|X|\nonumber\\
 & \ge  & \frac{1}{4}\left(\frac{(np)^{\frac{11}{14}}}{2\log^{\frac{5}{7}}n}-22\sqrt{np}\right)|X|\ge 0\nonumber,
\end{eqnarray}
as desired.%
   \COMMENT{For the last inequality we use that $(np)^{11/14}\ge 44(np)^{1/2}\log^{5/7}n$ since $np\gg \log^{5/2}n$.}
\item $|X|>\frac{n}{2}$ and $|Y|>\frac{1}{4}|X|(np)^{-\frac{3}{14}}\log^{-\frac{5}{7}}n$.

\smallskip

In this case we have
\[n-|X|\ge|Y|\ge\frac{|X|}{4(np)^{\frac{3}{14}}\log^{\frac{5}{7}}n}\ge\frac{n^{\frac{11}{14}}}{8p^{\frac{3}{14}}\log^{\frac{5}{7}}n}.\]
But as in the previous case, one can show that~(\ref{eq:beta2}) still holds and so
\begin{align*}
\beta_{f}(X,Y)-\alpha_{f}(X,Y) & \ge \left(p(n-|X|)-2(np\log n)^{\frac{5}{7}}\right)|Y|-5\sqrt{np}|X|\\
& \ge  \left(\frac{(np)^{\frac{11}{14}}}{8\log^{\frac{5}{7}}n}-2(np\log n)^{\frac{5}{7}}\right)|Y|-5\sqrt{np}|X|\\
 & \ge \frac{\left(np\right)^{\frac{11}{14}}}{9\log^{\frac{5}{7}}n}|Y|-5\sqrt{np}|X|\\
 & \ge  \left(\frac{(np)^{\frac{4}{7}}}{36\log^{\frac{10}{7}}n}-5\sqrt{np}\right)|X|\ge 0,\\
\end{align*}
as desired.%
   \COMMENT{For the third inequality we use that $(np)^{11/14}\ge 144 (np)^{5/7}\log^{10/7} n$ as $np\ge \log^{21}n$.
Similarly, the final inequality holds since $(np)^{4/7}\ge 180(np)^{1/2}\log^{10/7} n$, i.e. $(np)^{1/14}\ge 180\log^{10/7} n$
as $np\ge \log^{21}n$.}
\end{caseenv}
This completes the proof of the lemma.
\end{proof}

\begin{cor}\label{cor:exactregularise}
Let $G$ be a $p$-pseudorandom graph on $n$ vertices, where $pn\ge \log^{21}n$.
Suppose that $H$ is a graph on $V(G)$ which satisfies the following conditions:
\begin{itemize}
\item $H$ is $\sqrt{np}$-downjumping.
\item If $x_0$ is the unique vertex of maximum degree in $H$ then $H-x_0$ and $G-x_0$ are edge-disjoint.
\item $\Delta(H)$ is even.
\item $\Delta(H)-\delta(H)\le(np\log n)^{\frac{5}{7}}$.
\end{itemize}
Then there exists a $\Delta(H)$-regular graph $H'$ such that $H\subseteq H'\subseteq G\cup H$.
\end{cor}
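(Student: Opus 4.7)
The plan is to deduce Corollary~\ref{cor:exactregularise} by a direct application of Lemma~\ref{lem:ffactors}. Define $f:V(G)\to\N\cup\{0\}$ by $f(v):=\Delta(H)-d_{H}(v)$. If we can find an $f$-factor $F$ of $G$ which is edge-disjoint from $H$, then $H':=H\cup F$ is $\Delta(H)$-regular (since $d_{H'}(v)=d_H(v)+f(v)=\Delta(H)$ for every $v$) and satisfies $H\subseteq H'\subseteq G\cup H$, as required.

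First I would verify the hypotheses of Lemma~\ref{lem:ffactors} with this $f$. The vertex $x_0$ of Lemma~\ref{lem:ffactors} will be taken to be the unique maximum-degree vertex of $H$; then $f(x_0)=0$, which is even and certainly at most $(np\log n)^{5/7}$. For any $v\ne x_0$, the $\sqrt{np}$-downjumping property of $H$ yields $f(v)=d_H(x_0)-d_H(v)\ge\sqrt{np}$, while $f(v)\le \Delta(H)-\delta(H)\le(np\log n)^{5/7}$ by hypothesis. Finally, $\sum_{v\in V}f(v)=n\Delta(H)-2e(H)$ is even because $\Delta(H)$ is even. Hence Lemma~\ref{lem:ffactors} (which applies since $pn\ge\log^{21}n$ and $G$ is $p$-pseudorandom) produces an $f$-factor $F$ of $G$.

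It remains to check that $F$ is edge-disjoint from $H$, which is the one substantive point to watch. Since $f(x_0)=0$, the $f$-factor $F$ uses no edge incident to $x_0$, so $F\subseteq G-x_0$. By hypothesis $G-x_0$ and $H-x_0$ are edge-disjoint, so every edge of $F$ is disjoint from $H-x_0$, and since $F$ has no edge at $x_0$ it is in particular disjoint from the edges of $H$ incident to $x_0$ as well. Thus $F$ and $H$ are edge-disjoint, completing the proof.

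There is no real obstacle beyond lining up the hypotheses: the downjumping assumption gives the lower bound on $f(v)$ for $v\ne x_0$, the bound $\Delta(H)-\delta(H)\le(np\log n)^{5/7}$ gives the upper bound, the parity of $\Delta(H)$ supplies the parity condition, and the edge-disjointness assumption between $H-x_0$ and $G-x_0$ is exactly what is needed to combine the $f$-factor of $G$ with $H$ into a simple graph $H'$ of the desired form.
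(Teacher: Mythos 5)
Your proposal is correct and follows exactly the paper's own argument: define $f(v):=\Delta(H)-d_H(v)$, verify the hypotheses of Lemma~\ref{lem:ffactors} using the downjumping, parity and degree-spread assumptions, and take $H'$ to be the union of $H$ with the resulting $f$-factor. Your explicit check that the $f$-factor is edge-disjoint from $H$ (via $f(x_0)=0$ and the edge-disjointness of $H-x_0$ and $G-x_0$) is a point the paper leaves implicit, but it is the same proof.
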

\begin{proof}
Define $f(v):=\Delta(H)-d_{H}(v)$ for all $v\in V(G)$.
Then
\[\sum_{v\in V}f(v)=n\Delta(H)-\sum_{v\in V}d_{H}(v),\]
which is even. Moreover $f(x_{0})=0$ and our assumptions on~$H$ imply that
\[\sqrt{np}\le f(v)\le\Delta(H)-\delta(H)\le(np\log n)^{\frac{5}{7}}\]
for all $v\in V\setminus\{x_{0}\}$.
We may therefore apply \prettyref{lem:ffactors} to find an $f$-factor $G'$ in~$G$. Then $H':=H\cup G'$
is a $\Delta(H)$-regular graph as desired.
\end{proof}


\section{Proof of Theorem~\ref{thm:main-result}}\label{sec:proof}

The main tool for our proof of Theorem~\ref{thm:main-result} is the following result from \cite[Lemma~47]{Knox2011e}.
Roughly speaking, it asserts that given a regular graph $H_0$ which is contained in a pseudorandom graph $G$
and given a pseudorandom subgraph $G_0$ of $G$ which is allowed to be quite sparse compared to $H_0$, we can find a set of edge-disjoint Hamilton cycles in 
$H_0 \cup G_0$ which cover all edges of $H_0$. For technical reasons, instead of a single pseudorandom graph $G_0$, 
in its proof we actually need to  consider a union of several edge-disjoint pseudorandom graphs $G_1,\dots,G_{2m+1}$, where $m$ is close to $\log n$.

\begin{lem}\label{lem:prcovering}
Suppose that $p_{0}\ge\frac{\log^{14}n}{n}$ and
$p_{1}\ge\frac{(np_{0})^{\frac{3}{4}}\log^{\frac{5}{2}}n}{n}$. Let
$m:=\frac{\log(n^{2}p_{1})}{\log\log n}$, and for all $i\in[2m+1]$ set
$p_{i}:=p_{1}$ if $i$ is odd, and $p_{i}:=10^{10}p_{1}$ if $i$ is even.
Let $G$ be a $p_{0}$-pseudorandom graph on $n$ vertices. Suppose that
$G_{1},\ldots,G_{2m+1}$ are pairwise edge-disjoint spanning subgraphs of $G$
such that each $G_{i}$ is $p_{i}$-pseudorandom. Moreover, for all $i\in[2m+1]$,
let $H_{i}$ be an even-regular spanning subgraph of $G_{i}$ with
$\delta(G_{i})-1\le d(H_{i})\le\delta(G_{i})$. Suppose that $H_{0}$ is
an even-regular spanning subgraph of $G$ which is edge-disjoint from $\bigcup_{i=1}^{2m+1}H_{i}$.
Then there exists a collection $\mathcal{HC}$ of edge-disjoint Hamilton cycles such that the union $HC := \bigcup \mathcal{HC}$
of all these Hamilton cycles satisfies $H_{0}\subseteq HC\subseteq\bigcup_{i=0}^{2m+1}H_{i}$.
\end{lem}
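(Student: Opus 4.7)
The plan is an iterative cycle-merging argument carried out in $m$ batches. I would turn roughly $d(H_0)/2$ ``pieces'' of $H_0$ into Hamilton cycles, one by one, by splicing in a small number of auxiliary edges drawn from $H_1,\dots,H_{2m+1}$; the resulting collection $\mathcal{HC}$ then covers $E(H_0)$ exactly and lies inside $\bigcup_{i=0}^{2m+1}H_i$ as required.

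\textbf{Decomposing $H_0$ into few-cycle 2-factors.} Since $H_0$ is even-regular of degree $d_0=O(np_0)$, I would first partition $E(H_0)$ into $d_0/2$ edge-disjoint 2-factors $F_1,\dots,F_{d_0/2}$ in such a way that every $F_j$ has at most $O(\log n)$ components. Because $H_0\subseteq G$ inherits enough pseudorandomness from $G$, a random 2-factor construction (for instance a uniformly random $1$-factorisation of a bipartite double cover of $H_0$) together with a routine concentration argument produces such a decomposition a.a.s. Then group the 2-factors into $m$ batches $\mathcal{B}_1,\dots,\mathcal{B}_m$ of roughly equal size.

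\textbf{Batch-by-batch cycle merging.} For batch $\mathcal{B}_k$, I would use the sparse auxiliary graph $H_{2k-1}$ to perform the actual merging, and reserve the denser $H_{2k}$ as a ``regularisation reservoir'' to repair the leftover of $H_{2k-1}$ at the end of the batch. To convert a single 2-factor $F$ with cycles $C_1,\dots,C_\ell$ into a Hamilton cycle, I perform $\ell-1$ successive $2$-switches: pick edges $uv\in C_a$ and $xy\in C_b$ in two distinct current cycles such that $ux,vy$ both lie in the current state of $H_{2k-1}$, and swap $\{uv,xy\}$ for $\{ux,vy\}$, merging $C_a$ and $C_b$. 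The displaced $H_0$-edges $uv,xy$ are returned to the pool and absorbed into later 2-factors. The existence of such a switching pair, even after forbidding the bounded set of vertices already touched, follows from (P1) for $H_{2k-1}$: it forces the number of switching rectangles between any two linearly-sized vertex sets to be $\Omega(p_1^2 n^2)$, which is $\gg 1$ in the regime $p_1 \ge (np_0)^{3/4}\log^{5/2}n/n$. At the end of batch $\mathcal{B}_k$, I would invoke Corollary~\ref{cor:exactregularise} with $H_{2k}$ playing the role of $G$ to top up the residue of $H_{2k-1}$ back into an even-regular, still-pseudorandom graph, so that the hypotheses carry into batch $\mathcal{B}_{k+1}$. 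The final index $2m+1$ supplies an overflow graph for a clean-up round that absorbs any $H_0$-edges that remain misplaced after the $m$ main batches.

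\textbf{Main obstacle.} The hard part is the bookkeeping needed to certify that every invariant — pseudorandomness of the current $H_i$, even-regularity after each restoration step, and edge-disjointness between rounds — survives all $m$ iterations. The total number of merging edges used is $O((d_0/2)\log n)=O(np_0\log n)$, which by the hypothesis $p_1 \ge (np_0)^{3/4}\log^{5/2}n/n$ is only a tiny fraction of $e(H_i)=\Theta(n^2 p_1)$. Perturbations of this size (and with the vertex-concentration of at most one vertex per round) leave properties (P1)--(P6) intact by essentially the argument of Lemma~\ref{lem:changingvertices}, and Lemma~\ref{lem:reghelper} keeps the leftover graphs connected. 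The subtlest technical point is to ensure that throughout a single batch the temporarily ``in-debt'' $H_0$-edges can always be absorbed into a later 2-factor without creating a conflict with a previously chosen switch, which is where the precise choice of $m=\log(n^2p_1)/\log\log n$ and the parity pattern of the $p_i$'s (odd indices sparse, even indices a constant factor denser) are essential.
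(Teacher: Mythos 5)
This statement is not proved in the paper at all: it is quoted verbatim as Lemma~47 of the Knox--K\"uhn--Osthus paper \cite{Knox2011e}, where its proof is the main technical content of that work, so there is no in-paper argument to compare against. Judged on its own, your sketch has genuine gaps at exactly the points where the real difficulty lies. First, the opening step --- a decomposition of $E(H_0)$ into $2$-factors \emph{each} with $O(\log n)$ components --- cannot be obtained by a ``routine concentration argument''. The permanent-based random $2$-factor arguments give a typical $2$-factor of a pseudorandom graph of degree $d$ about $\max\{\log n,\,(n/d)\log n\}$ cycles, which for $np_0$ as small as $\log^{14}n$ is polynomial in $n$, not $O(\log n)$; and even that is for a single factor, not simultaneously for every factor of a full decomposition. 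If such a decomposition were available, most of the work of the lemma would already be done; the whole reason for the $m\approx\log(n^2p_1)/\log\log n$ auxiliary graphs is that one must reduce the number of cycles gradually over many rounds because it starts out large.

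Second, the merging and bookkeeping steps do not work as described. A $2$-switch between two cycles $C_a$ and $C_b$ requires a ``rectangle'' $ux,vy\in H_{2k-1}$ with $uv\in C_a$, $xy\in C_b$; when $C_b$ is short (say a triangle), the expected number of such rectangles is of order $np_1^2=(np_1)\cdot p_1$, which is $o(1)$ in the allowed range $np_1\approx(np_0)^{3/4}\log^{5/2}n$, so jumbledness (P1) cannot supply the switch, and short cycles are exactly what a $2$-factor of a sparse graph may contain. Moreover, the displaced $H_0$-edges cannot be ``returned to the pool and absorbed into later $2$-factors'': those factors are already $2$-regular, so adding edges destroys them, and guaranteeing that every discarded $H_0$-edge is eventually covered by some edge-disjoint Hamilton cycle is precisely the covering (as opposed to packing) difficulty that the lemma is about; your sketch assumes it away. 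Finally, invoking Corollary~\ref{cor:exactregularise} to ``top up the residue of $H_{2k-1}$'' is not legitimate: that corollary needs a $\sqrt{np}$-downjumping graph with even maximum degree and a small degree spread, none of which the residue graphs are guaranteed to satisfy, and the residue of $H_{2k-1}$ does not need covering in any case. As it stands the proposal is an outline of a plausible-sounding strategy, but each of these three points is a substantive missing idea rather than a detail.
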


The following lemma is a special case of Lemma 22(ii) of \cite{Knox2011e}. Given $p_i$-pseudo\-random
graphs $G_i$ as in Lemma~\ref{lem:prcovering}, it allows us to find the even-regular spanning subgraphs
$H_i$ required by Lemma~\ref{lem:prcovering}.

\begin{lem}
\label{lem:rfactors}Let $G$ be a $p$-pseudorandom graph on $n$
vertices such that $p,1-p=\omega\left(\log^{2}n/n\right)$.
Then $G$ has an even-regular spanning subgraph $H$ with $\delta(G)-1\le d(H)\le\delta(G)$.
\end{lem}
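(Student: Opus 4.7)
The plan is to apply Tutte's $f$-factor theorem (Theorem~\ref{thm:Tutte}) with the constant function $f\equiv r$, where $r$ is the largest even integer with $r\le\delta(G)$, so $r\in\{\delta(G),\delta(G)-1\}$. An $r$-factor of $G$ is then precisely the desired even-regular spanning subgraph $H$, and the parity condition $\sum_v f(v)=rn$ is automatic since $r$ is even. It thus suffices to verify
$$
\alpha_f(X,Y)\le\beta_f(X,Y):=r|X|+\sum_{y\in Y}(d_G(y)-r)-e_G(X,Y)
$$
for every pair of disjoint $X,Y\subseteq V(G)$, where $\alpha_f(X,Y)$ counts components $K$ of $G-X-Y$ with $r|K|+e(K,Y)$ odd. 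Lemma~\ref{lem:reghelper} implies that $G$ is connected and that $G-X-Y$ has at most $|X|+|Y|$ components; combined with the parity of $rn$, this gives $\alpha_f\le|X|+|Y|$ when $X\cup Y\ne\emptyset$ and $\alpha_f(\emptyset,\emptyset)=0$. The case $Y=\emptyset$ is immediate: $r$ being even forces $r|K|$ to be even and hence $\alpha_f=0$, while $\beta_f=r|X|\ge0$.

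For $X=\emptyset$ and $|Y|=1$ a short parity argument resolves the only subtle subcase: when $Y=\{y_1\}$ is the minimum-degree vertex, $d_G(y_1)=\delta(G)$ has the same parity as $r\in\{\delta(G),\delta(G)-1\}$, forcing $\alpha_f=\beta_f\in\{0,1\}$; otherwise strongly 2-jumping ((P6)) gives $d_G(y)\ge\delta(G)+2$ and hence $\beta_f\ge2\ge\alpha_f$. For $X=\emptyset$ and $|Y|\ge2$, strongly 2-jumping yields $\sum_{y\in Y}d_G(y)\ge|Y|\delta(G)+2(|Y|-1)$, so $\beta_f\ge 2|Y|-2\ge|Y|$.

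The main case is $|X|,|Y|\ge1$. If $|X|=1$, then $p\le1<\tfrac{2}{7}\log n$ for $n$ large, so condition (i) of (P2) applies and gives $e_G(X,Y)\le2(1+|Y|)\log n$; combined with strongly 2-jumping on $Y$ (so $\sum(d_G(y)-r)\ge|Y|\min(|Y|-1,\log^2 n)$) and the bound $r\ge np/2\gg\log^2 n$ coming from (P4) and the assumption $np=\omega(\log^2n)$, this yields $\beta_f\ge1+|Y|$. If $|X|\ge2$, I use~(\ref{eq:jumbled}) from (P1) to bound $e_G(X,Y)\le p|X||Y|+4\sqrt{np(1-p)}(|X|+|Y|)$ and rewrite $r|X|-p|X||Y|=p|X|(r/p-|Y|)$, where $r/p\ge n-O(\sqrt{n\log n/p})$ by (P4); the main term $p|X|(r/p-|Y|)$ dominates the error whenever $|Y|\le(1-\eps)n$. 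When $|Y|$ is close to $n$, combining $\sum_{y\in Y}d_G(y)\ge p|Y|(n-1)-O(n\sqrt{np(1-p)})$ (again from (P1)) with $r\le\delta(G)\le np-200\sqrt{np(1-p)}$ (the upper bound in (P4)) yields a lower bound $\sum(d_G(y)-r)\ge\sqrt{np(1-p)}\,(\Omega(|Y|)-O(n))$, which is positive and large for $|Y|=\Omega(n)$ and absorbs any deficit from the first term.

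The main obstacle I anticipate is the intermediate regime $|X|$ small (say $|X|=2$) with $|Y|$ moderate, where $|Y|$ is too large for the $p|X|(r/p-|Y|)$ term alone to dominate but too small for the excess-degree bound to be positive. Careful bookkeeping combining the three tools (jumbledness, the gap in (P4), and strongly 2-jumping on $Y$) should close this gap, using the full strength of the assumption $p,1-p=\omega(\log^2n/n)$.
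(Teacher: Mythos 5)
Your overall strategy --- Tutte's theorem with the constant function $f\equiv r$, $r$ the largest even integer at most $\delta(G)$, combined with Lemma~\ref{lem:reghelper} to bound $\alpha_f$ by $|X|+|Y|$ --- is a reasonable way to attack this, and your treatment of the cases $Y=\emptyset$, $X=\emptyset$ and $|X|=1$ is sound. Note, however, that the paper does not prove this lemma at all: it is imported as a special case of Lemma~22(ii) of \cite{Knox2011e}, so you are reproving a cited result, and the burden is on you to actually close every case.

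You have not done so, and the unresolved case is not a matter of ``careful bookkeeping'': one of your intermediate claims is simply false. You assert that for $|X|\ge 2$ the main term $p|X|(r/p-|Y|)$ dominates the error from \eqref{eq:jumbled} whenever $|Y|\le(1-\eps)n$. Take $p=1/2$, $|X|=2$, $|Y|=n/20$: the main term is $|X|(r-p|Y|)=\Theta(n)$, while the additive error $4\sqrt{np(1-p)}(|X|+|Y|)=\Theta(n^{3/2})$ swamps it; the excess-degree bound from the upper bound in (P4) is also useless here since $199|Y|-O(n)<0$, and strongly $2$-jumping only contributes $|Y|\log^{2}n=\Theta(n\log^2 n)\ll n^{3/2}$. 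So in the regime $|X|\sqrt{np/(1-p)}\ll|Y|\ll n$ none of the three tools you name, in the form you use them, gives $\beta_f\ge|X|+|Y|$. The fix is to abandon jumbledness there: for $|Y|\le r/2$ the trivial bound $e(X,Y)\le|X||Y|$ already gives $\beta_f\ge|X|(r-|Y|)\ge|X|r/2\ge|X|+|Y|$; for $r/2<|Y|\le cn$ (with $c$ a small constant) one should instead invoke (P2) --- either $e(X,Y)\le 2(|X|+|Y|)\log n$ or $e(X,Y)\le 7p|X||Y|\le \tfrac{7c}{1-o(1)}r|X|$ --- together with the strongly $2$-jumping bound $\sum_{y\in Y}(d(y)-r)\ge|Y|\min\{|Y|-1,\log^{2}n\}$; and only for $|Y|\ge cn$ does your excess-degree argument via the upper bound $\delta(G)\le np-200\sqrt{np(1-p)}$ take over (there one also needs $\sqrt{np(1-p)}=\omega(\log n)$, which is where the hypothesis $p,1-p=\omega(\log^2 n/n)$ enters). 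As written, the proof has a genuine hole exactly where you suspected one.
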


The next lemma ensures that $G\sim G_{n,p}$ contains a collection of Hamilton cycles which cover all edges of $G$ except for
some edges at the vertex $x_0$ of maximum degree and such that every edge at $x_0$ is covered at most once.
Theorem~\ref{thm:main-result} will then be an easy consequence of this lemma and Theorem~\ref{lem:hamilton-connected}.

\begin{lem}\label{thm:defactomainresult}
Let $G\sim G_{n,p}$, where $\frac{\log^{117}n}{n}\le p\le 1-n^{-\frac{1}{8}}$.
Then a.a.s.~$G$ has a unique vertex $x_0$ of degree $\Delta(G)$ and there
exist a collection $\mathcal{HC}$ of Hamilton cycles in $G$ and
a collection $F$ of edges incident to $x_{0}$ such that 
\begin{itemize}
\item[{\rm (i)}] every edge of $G-F$ is covered by some Hamilton cycle in $\mathcal{HC}$;
\item[{\rm (ii)}] no edge in $F$ is covered by a Hamilton cycle in $\mathcal{HC}$;
\item[{\rm (iii)}] no edge incident to $x_{0}$ is covered by more than one
Hamilton cycle in $\mathcal{HC}$.
\end{itemize}
\end{lem}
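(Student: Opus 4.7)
The plan is to reduce Lemma~\ref{thm:defactomainresult} to an application of Lemma~\ref{lem:prcovering}, after a random partition of $G$ into pseudorandom layers and a regularisation via Corollary~\ref{cor:exactregularise}, with any residual edges handled through Theorem~\ref{lem:hamilton-connected}.

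First I would condition on several a.a.s.~structural properties of $G$: strong $p$-pseudorandomness (Lemma~\ref{lem:rgsarepr}), a unique maximum-degree vertex $x_0$ satisfying $\Delta(G) - d_G(v) \ge 5\sqrt{np(1-p)}/\log n$ for all $v \ne x_0$ (Lemma~\ref{lem:downjumping}), and Hamilton-connectedness of $G - x_0$ (Theorem~\ref{lem:hamilton-connected}). Let $|F|\in\{0,1\}$ have the same parity as $\Delta(G)$, set $N := (\Delta(G)-|F|)/2$, and let $F$ consist of $|F|$ edges of $G$ at $x_0$. Aiming for $|\mathcal{HC}|=N$, every Hamilton cycle will contribute $2$ to the multiplicity sum at each vertex; at $x_0$ this gives $2N = \Delta(G) - |F|$ slots, exactly enough to cover each $x_0$-edge of $G-F$ once, while at $v\ne x_0$ the ``excess'' $2N - d_G(v) = \Delta(G) - |F| - d_G(v)\ge 0$ can be absorbed by double-covers of edges at $v$. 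It is the downjump that guarantees this nonnegativity simultaneously with the single-cover constraint at $x_0$.

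Next I would randomly partition $E(G)$ into edge-disjoint spanning subgraphs $G_0,G_1,\dots,G_{2m+1}$ (with $m\approx\log(n^2p_1)/\log\log n$), where $G_0$ has density $p_0$ close to $p$ and $G_1,\dots,G_{2m+1}$ are sparse reservoirs with densities chosen to meet the hypotheses of Lemma~\ref{lem:prcovering}. Combining Lemma~\ref{lem:chernoff} (Chernoff), Lemma~\ref{lem:rgsarepr} applied conditionally, and a union bound, each $G_i$ is a.a.s.~strongly $p_i$-pseudorandom, and the partition preserves $x_0$ as the unique maximum-degree vertex of $G_0$ with an appropriate residual downjump. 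The graph $H := G_0 - F$ then satisfies the hypotheses of Corollary~\ref{cor:exactregularise}: $\sqrt{np}$-downjumping (from the preserved downjump), $\Delta(H)$ even (by the parity choice of $|F|$), and $\Delta(H)-\delta(H)\le(np\log n)^{5/7}$ (from pseudorandomness of $G_0$). Applying Corollary~\ref{cor:exactregularise} to $H$ using the edge-disjoint pseudorandom graph $G_1$ yields an $r$-regular graph $H_0$ with $H\subseteq H_0\subseteq H\cup G_1$, where $r:=\Delta(H)$. Apply Lemma~\ref{lem:rfactors} to each $G_i$, $i\ge 2$, to extract even-regular spanning subgraphs $H_i\subseteq G_i$ of nearly maximal degree, then invoke Lemma~\ref{lem:prcovering} to obtain a collection $\mathcal{HC}_0$ of edge-disjoint Hamilton cycles whose union covers $H_0$ and lies inside $H_0\cup\bigcup_{i\ge 2}H_i$.

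This covers all of $G_0 - F \subseteq H_0$, leaving as residual uncovered edges the set $R := (G_1\setminus H_0)\cup\bigcup_{i\ge 2}(G_i\setminus H_i)\subseteq E(G)\setminus F$, which is much sparser than $G$ itself. I would cover $R$ by iterating the same construction on $R$ (reserving further thin sub-layers in advance to play the roles of bulk and reservoirs) or, for the last few edges, by using Theorem~\ref{lem:hamilton-connected} to close a Hamilton path in $G-x_0$ through a carefully chosen pair of previously unused $x_0$-edges into a full Hamilton cycle of $G$. The hardest part, and the technical heart of the argument, will be coordinating the usage of $x_0$-edges across all applications and ad-hoc cycles so that no $x_0$-edge is used twice (condition~(iii)); the downjump of $x_0$ supplies just enough spare $x_0$-edges to feed every Hamilton cycle constructed at every stage while leaving the set $F$ untouched, which is precisely why the stated upper bound $p\le 1-n^{-1/8}$ on the density is needed.
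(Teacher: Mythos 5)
Your first round matches the paper's first iteration almost exactly (random layering, regularisation via Corollary~\ref{cor:exactregularise}, covering via Lemma~\ref{lem:prcovering} after extracting the $H_i$ with Lemma~\ref{lem:rfactors}), but the proposal has a genuine gap where you write ``iterate the same construction on $R$ or, for the last few edges, use Theorem~\ref{lem:hamilton-connected}''. The residual $R$ after one round is not ``a few edges'': it has density roughly $(np)^{3/4}\log^{O(1)}n/n$, i.e.\ $\Theta(n\cdot(np)^{3/4})$ edges up to polylogarithmic factors, and the Hamilton-connectedness of $G-x_0$ can only be used to absorb \emph{pairs of edges at $x_0$} (one closes a Hamilton path of $G-x_0$ through two prescribed $x_0$-edges); it gives no control over which edges of $G-x_0$ the resulting cycle uses, so it cannot cover prescribed residual edges away from $x_0$. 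Moreover that trick belongs to the derivation of Theorem~\ref{thm:main-result} from this lemma, not inside the lemma, since conditions (ii) and (iii) forbid touching $F$ or reusing $x_0$-edges.

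The missing idea is the termination mechanism for the iteration, which is the technical heart of the paper's proof and the source of the exponent $117$. Each round reduces the density via $p\mapsto(np)^{3/4}\log^{7/2}n/n$, and the recursion must bottom out: the paper runs exactly three rounds, and in the later rounds both the regularising graphs ($R_2'$, $R_4$) and the final covering graph ($G_4$) are taken from \emph{already covered} edges of $G_1$. This is harmless away from $x_0$, but at $x_0$ it would violate (iii); the paper avoids this by (a) noting that no regularising edges are added at $x_0$ because $x_0$ has maximum degree in each remainder, and (b) pre-reserving the set $F^*$ of all $G_4$-edges at $x_0$ inside $F_1$ before the first round, which is only possible because after three density reductions $\Delta(G_4)\le 2np_4\le u=\sqrt{np(1-p)}/\log n$, the downjump -- this is precisely inequality~(\ref{eq:pnu}) and where both bounds on $p$ enter. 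Your proposal acknowledges that ``coordinating the usage of $x_0$-edges across all applications'' is the hardest part but supplies no mechanism for it, and without the quantitative bookkeeping showing that the last layer's degree at $x_0$ fits under the downjump, the argument does not close.
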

Note that in Lemma~\ref{thm:defactomainresult}, we have $|\mathcal{HC}| = (\Delta(G) - |F|)/2$.

The strategy of our proof of Lemma~\ref{thm:defactomainresult} is as follows.
We split $G\sim G_{n,p}$ into three edge-disjoint random graphs $G_1$, $G_2$ and $R$ such that the density of $G_1$ is
almost $p$ and both $G_2$ and $R$ are much sparser. 
It turns out we may assume that the vertex $x_0$ of maximum degree in $G$ also has maximum degree in $G_1$.
We then apply Corollary~\ref{cor:exactregularise} in order to extend $G_1$ into a $\Delta(G_1)$-regular
graph by using some edges of $R$. Next we apply Lemma~\ref{lem:prcovering} in order to cover this regular graph
with edge-disjoint Hamilton cycles, using some edges of~$G_2$.

Let $H_2$ be the subgraph of $R\cup G_2$ which is not
covered by these Hamilton cycles. Again, we can make sure that $x_0$ is still the vertex of maximum degree in~$H_2$.
We now apply Corollary~\ref{cor:exactregularise} again in order to extend $H_2$ into a $\Delta(H_2)$-regular
graph $H_2'$ by using edges of a random subgraph $R'$ of~$G_1$ (i.e.~edges which we have already covered by Hamilton cycles).
Finally, we would like to apply Lemma~\ref{lem:prcovering} in order to cover this regular graph by edge-disjoint
Hamilton cycles, using edges of another sparse random subgraph $G'$ of~$G_1$. However, this means that in the last step we might use edges of $G'$ at $x_0$,
i.e.~edges which have already been covered with edge-disjoint Hamilton cycles. Clearly, this would violate condition~(iii) of the lemma.

We overcome this problem as follows: at the beginning, we delete all those
edges at $x_0$ from $G_1$ which lie in $G'$, and then we regularize and cover the graph $H_1$ thus obtained from $G_1$ as before, instead of $G_1$ itself.
However, we have to ensure that $x_0$ is still the vertex of maximum degree in $H_1$. This forces us to make $G'$ quite sparse: the average degree of $G'$
needs to be significantly smaller than the gap between $d_G(x_0)=\Delta(G)$ and the degree of the next vertex, i.e.~significantly smaller than
$\sqrt{np(1-p)}/\log n$. Unfortunately it turns out that such a choice would make $G'$  too sparse to apply Lemma~\ref{lem:prcovering} in order to cover $H_2$. 
Thus the above two `iterations' are not sufficient to prove the lemma (where each iteration consists of an application of Corollary~\ref{cor:exactregularise} to regularize and
then an application of Lemma~\ref{lem:prcovering} to cover). But with three iterations, the above approach can be made to work.

\medskip

\noindent
\emph{Proof of Lemma~\ref{thm:defactomainresult}.}
Lemmas~\ref{lem:rgsarepr} and~\ref{lem:downjumping} imply that a.a.s.~$G$ satisfies the following two conditions:
\begin{itemize}
\item[(a)] $G$ is $p$-pseudorandom.
\item[(b)] $G$ is $5u$-downjumping, where $u:=\frac{\sqrt{np(1-p)}}{\log n}.$
\end{itemize}
Note that
\begin{equation}\label{eq:pnu}
(np)^{\frac{27}{64}}\log^{\frac{259}{32}}n=\frac{\sqrt{np(1-p)}}{\log n}\cdot\frac{\log^{\frac{291}{32}}n}{(np)^{\frac{5}{64}}\sqrt{1-p}}\le\frac{u}{2}.
\end{equation}
Indeed, to see the last inequality note that either $1-p\ge 1/2$ and $(np)^{\frac{5}{64}}\ge\log^{\frac{292}{32}}n$
or%
    \COMMENT{This is equivalent to $np \ge \log^{2\cdot 292/5}n=\log^{116.8} n$. So this is the point where we need that $np\ge \log^{117} n$.
For the next calculation we need the upper bound on~$p$.}
$(np)^{\frac{5}{64}}\ge (n/2)^{\frac{5}{64}}$
and $\sqrt{1-p}\ge n^{-\frac{1}{16}}$. So here we use the bounds on~$p$ in the lemma. Define
\begin{eqnarray*}
p_{2} & := & \frac{(np)^{\frac{3}{4}}\log^{\frac{7}{2}}n}{n}\ge\frac{\log^{91}n}{n},\\
p_{3} & := & \frac{(np_{2})^{\frac{3}{4}}\log^{\frac{7}{2}}n}{n}=\frac{(np)^{\frac{9}{16}}\log^{\frac{49}{8}}n}{n}\ge\frac{\log^{71}n}{n},\\
p'_{3} & := & 1600p_3,\\
p_{4} & := & \frac{(np_{3})^{\frac{3}{4}}\log^{\frac{7}{2}}n}{n}=\frac{(np)^{\frac{27}{64}}\log^{\frac{259}{32}}n}{n}\ge\frac{\log^{57}n}{n},\\
p_{1} & := & p-2p_{2}-p_{3},\\
m_{i} & := & \frac{\log(n^{2}p_{i})}{\log\log n}  \ \ \ \textnormal{for all} \ \ \ 2 \leq i \leq 4,\\
p_{(i,j)} & := & \begin{cases}
\frac{p_{i}}{(10^{10}+1)m_{i}+1} & \textnormal{ if } 2 \leq i \leq 4 \text{ and if } j \in [2m_i+1] \textnormal{ is odd},\\
\frac{10^{10}p_{i}}{(10^{10}+1)m_{i}+1} & \textnormal{ if } 2 \leq i \leq 4 \text{ and if } j \in [2m_i+1] \textnormal{ is even.}
\end{cases} 
\end{eqnarray*}
Now form random subgraphs of $G$ as follows. First partition $G$
into edge-disjoint random graphs $G_{1}$, $G_{2}$, $G_{3}$ and $R_{2}$
such that $G_{i}\sim G_{n,p_{i}}$ for $i = 1,2,3$ and $R_{2}\sim G_{n,p_{2}}$. (This can be done by randomly including each edge $e$ of $G$ into
precisely one of $G_{1}$, $G_{2}$, $G_{3}$ and $R_{2}$, where the probability that $e$ is included into $G_i$ is $p_i/p$ and
the probability that $e$ is included into $R_2$ is $p_2/p$, independently of all other edges of $G$.) We then choose
edge-disjoint random subgraphs $R'_{2}$, $R_{4}$ and $G_{4}$ of $G_{1}$
with $R'_{2}\sim G_{n,p_{2}}$, $R_{4}\sim G_{n,p_{4}}$, and $G_{4}\sim G_{n,p_{4}}$.
(Since $p_1\ge p_2+2p_4$ this can be done similarly to before.)
Next we choose a random subgraph $G'_3$ of $G_2$ such that $G'_{3}\sim G_{n,p'_{3}}$.
To summarize, we thus have the following containments, where $\dot{\cup}$ denotes the edge-disjoint union of graphs:
$$
G=G_1 \ \dot{\cup} \ G_2 \ \dot{\cup} \ G_3 \ \dot{\cup} \ R_2 \ \ \ \mbox{and} \ \ \
G_1 \supseteq R'_2 \ \dot{\cup}\ R_4 \ \dot{\cup}\ G_4 \ \ \ \mbox{and} \ \ \
G_2 \supseteq G'_3.
$$
Finally, for each $i\in\{2,3,4\}$, we partition $G_{i}$ into edge-disjoint random
subgraphs $G_{(i,1)},\ldots,G_{(i,2m_{i}+1)}$ with $G_{(i,j)}\sim G_{n,p_{(i,j)}}$.
Lemma~\ref{lem:rgsarepr} and a union bound implies that a.a.s.~the following conditions hold:
\begin{itemize}
\item[(c)] $G_{i}$ is $p_{i}$-pseudorandom for all $i=1,\dots,4$.
\item[(d)] $G_{(i,j)}$ is $p_{(i,j)}$-pseudorandom for all $i=2,3,4$ and all $j\in [2m_i+1]$.
\item[(e)] $R_{2}$ and $R'_{2}$ are $p_{2}$-pseudorandom, and $R_{4}$ is $p_{4}$-pseudorandom.
\item[(f)] $R_2\cup G_{2}\cup R'_{2}\cup G_{3}$
is strongly $(3p_{2}+p_{3})$-pseudorandom and $G'_3\cup G_{3}\cup R_{4}\cup G_{4}$
is strongly $(p'_3+p_{3}+2p_{4})$-pseudorandom.
\end{itemize}
Since $R_2\cup G_{2}\cup R'_{2}\cup G_{3}\sim G_{n,3p_{2}+p_{3}}$ and $G'_3\cup G_{3}\cup R_{4}\cup G_{4}\sim G_{n,p'_3+p_{3}+2p_{4}}$,
Lemma~\ref{lem:maxmin} implies that a.a.s.~the following condition holds:
\begin{itemize}
\item[(g)] Let $x_0$ be the unique vertex of maximum degree of $G$. Then $x_0$ is not the vertex of minimum degree
in $R_2\cup G_{2}\cup R'_{2}\cup G_{3}$ or $G'_3\cup G_{3}\cup R_{4}\cup G_{4}$.
\end{itemize}
It follows that a.a.s.~conditions (a)--(g) are all satisfied; in the remainder of the proof we will thus assume that they are. 
We can apply \prettyref{lem:rfactors}
for each $i=2,3,4$ and each $j\in [2m_i+1]$ to obtain an even-regular spanning subgraph $H_{(i,j)}$ of $G_{(i,j)}$
with $\delta(G_{(i,j)})-1\le d(H_{(i,j)})\le\delta(G_{(i,j)})$. 

As indicated earlier, our strategy consists of the following three iterations. The purpose of the first iteration is to cover all the edges of $G_1$. To do this, we will apply Corollary~\ref{cor:exactregularise} in order to extend $G_1$ into a regular graph $H'_1$, using some edges of $R_2$. (Actually we will first set aside a set $F_1$ of edges of $G_1$ at $x_0$, but this will still leave $x_0$ the vertex of maximum degree in $H_1:=G_1-F_1$. In particular, $F_1$ will contain the set $F^*$ of all edges of $G_4$ at $x_0$.) We will then apply Lemma~\ref{lem:prcovering} to cover $H'_1$ with edge-disjoint Hamilton cycles, using some edges of $G_2$.

The purpose of the second iteration is to cover all the edges of $G_2\cup R_2$ not already covered in the first iteration -- we denote this remainder by $H_2$. It turns out that $x_0$ will still be the vertex of maximum degree in $H_2$. If $\Delta(H_2)$ is odd, then we will add one edge from $F_1\setminus F^*$ to $H_2$ to obtain a graph $H'_2$ of even maximum degree. Otherwise, we simply let $H'_2:=H_2$. We extend $H'_2$ into a regular graph $H''_2$ using Corollary~\ref{cor:exactregularise} and some edges of $R'_2$, then cover $H''_2$ with edge-disjoint Hamilton cycles using Lemma~\ref{lem:prcovering} and some edges of $G_3$.

The purpose of the third iteration is to cover all the edges of $G_3$ not already covered in the second iteration -- we denote this remainder by $H_3$. We first add some (so far unused) edges from $F_1 \setminus F^*$ to $H_3$ in order to make $x_0$ the unique vertex of maximum degree. Let $H'_3$ denote the resulting graph. We then extend $H'_3$ into a regular graph $H''_3$ using Corollary~\ref{cor:exactregularise} and some edges of $R_4$, and finally cover $H''_3$ with edge-disjoint Hamilton cycles using Lemma~\ref{lem:prcovering} and some edges of~$G_4$. 

It is in this iteration that we make use of $G_3'$, for technical reasons. It turns out that $G_3 \cup G_4 \cup R_4$ is so sparse that adding the required edges from $F_1 \setminus F^*$ may destroy its pseudorandomness, rendering it unsuitable as a choice of $G$ in Lemma~\ref{lem:prcovering}. Since the only role of $G$ in Lemma~\ref{lem:prcovering} is that of a `container' for the other graphs, this issue is easy to solve by adding a slightly denser random graph to $G_3 \cup G_4 \cup R_4$, namely $G_3'$.

Note that we did not use any edges of $R'_2$ at $x_0$ when turning $H'_2$ into $H''_2$ since $x_0$ is a vertex of maximum degree in $H'_2$. Similarly, we did not use any edges of $R_4$ at $x_0$ when turning $H'_3$ into $H''_3$. Moreover, $F^*$ was the set of all edges of $G_4$ at $x_0$ and no edge in $F^*$ was covered in the first two iterations.
Altogether this means that we do not cover any edge at $x_0$ more than once.

Note that in the second and third iterations, the graphs $R'_2$ and $R_4$ we use for regularising consist of edges we have already covered. In the second iteration, this turns out to be a convenient way of controlling the difference between the maximum and minimum degree of $H_3$ (which might have been about $\Delta(G) - \delta(G)$ if we had used uncovered edges). In the third iteration, there are simply no more uncovered edges available.

After outlining our strategy, let us now return to the actual proof.
We claim that $x_{0}$ is the unique vertex of maximum degree in $G_{1}$ and that $G_{1}$ is $4u$-downjumping. Indeed, for all
$x\ne x_{0}$ we have
\begin{align*}
d_{G_{1}}(x) & = d_{G}(x)-d_{G_2\cup G_3\cup R_2}(x) \stackrel{({\rm b})}{\le} d_{G}(x_{0})-5u-d_{G_2\cup G_3\cup R_2}(x)\\
& =d_{G_1}(x_{0})+d_{G_2\cup G_3\cup R_2}(x_0)-5u-d_{G_2\cup G_3\cup R_2}(x)\\
& \le d_{G_{1}}(x_{0})+\Delta(G_{2})+\Delta(G_{3})+\Delta(R_{2})-5u-\delta(G_{2})-\delta(G_{3})-\delta(R_{2})\\
 & \le d_{G_{1}}(x_{0})-\left(5u-12\sqrt{np_{2}\log n}\right),
\end{align*}
where the last inequality follows from the facts that both $G_2$ and $R_2$ are $p_2$-pseudorandom,
$G_3$ is $p_3$-pseudorandom, $p_3\le p_2$ as well as from (P4) and~(P5).
But
\begin{equation}\label{eq:p2gap}
\sqrt{np_{2}\log n}=(np)^{\frac{3}{8}}\log^{\frac{9}{4}}n\stackrel{(\ref{eq:pnu})}{\le} \frac{u}{2} \cdot (np)^{-\frac{3}{64}}
\le \frac{u}{\log n}.
\end{equation}
Altogether this shows that $d_{G_1}(x)\le d_{G_1}(x_0)-4u$ for all $x\neq x_0$. Thus $G_1$ is $4u$-downjumping and $x_0$ is
the unique vertex of maximum degree in $G_{1}$, as desired. 
Note that
\begin{equation}\label{eq:DeltaG4}
\Delta(G_{4})\le 2np_{4}=2(np)^{\frac{27}{64}}\log^{\frac{259}{32}}n \stackrel{(\ref{eq:pnu})}{\le} u.
\end{equation}
Let $F^*$ be the set of all edges of $G_4$ which are incident to~$x_0$. Thus $|F^*|\le u$ by~(\ref{eq:DeltaG4}).
Choose a set $F_{1}$ of edges incident to $x_{0}$ in $G_{1}$ such that $F^*\subseteq F_1$,
\begin{equation}\label{eq:sizeF1}
3u-1\le|F_{1}|\le 3u,
\end{equation}
and such that $\Delta(G_{1}-F_{1})$ is even. Note that we used (\ref{eq:DeltaG4}) and thus the full strength of~(\ref{eq:pnu})
(in the sense that it would no longer hold if we replace 117 by 116 in the lower bound on $p$ stated in Lemma~\ref{thm:defactomainresult})
in order to be able to guarantee that $F^*\subseteq F_1$. So this is the point where we need the bounds on~$p$ in the lemma.
Let $H_{1}:=G_{1}-F_{1}$. Thus $H_{1}$ is still $u$-downjumping.

Our next aim is to apply \prettyref{cor:exactregularise} in order to extend $H_1$ into
a $\Delta(H_1)$-regular graph $H_1'$, using some of the edges of $R_{2}$. So we need to check that the conditions
in \prettyref{cor:exactregularise} are satisfied. But since $G_1$ is $p_1$-pseudorandom we have
\begin{align}
\Delta(H_{1})-\delta(H_{1}) & \le \Delta(G_{1})-\delta(G_{1}) \stackrel{({\rm P4}),({\rm P5})}{\le } 4\sqrt{n p_1\log n}\nonumber\\
 & \le 4\sqrt{n p\log n} = 4(np_{2})^{\frac{2}{3}}\log^{-\frac{11}{6}}n \le (np_{2}\log n)^{\frac{5}{7}}.\label{eq:pnp_2n}
\end{align}
Moreover $p_2 \ge \log^{21}n/n$ and $H_{1}$
is $u$-downjumping and so $\sqrt{np_{2}}$-downjumping by~(\ref{eq:p2gap}).
Since $R_2$ is $p_2$-pseudorandom we may therefore apply \prettyref{cor:exactregularise}
to find a regular graph $H_{1}'$ of degree $\Delta(H_1)$ with $H_{1}\subseteq H_{1}'\subseteq H_{1}\cup R_{2}$. 

Next, we wish to apply \prettyref{lem:prcovering} in order to cover $H_1'$ with edge-disjoint Hamilton cycles. Note that for every $1 \leq j \leq 2 m_2 + 1$
\begin{equation}\label{eq:p2j}
np_{(2,j)}\ge\frac{np_{2}}{(10^{10}+1)m_{2}+1}\ge\frac{(np)^{\frac{3}{4}}\log^{\frac{7}{2}}n\log\log n}{10^{11}\log n}\ge(np)^{\frac{3}{4}}\log^{\frac{5}{2}}n.
\end{equation}
So we can apply \prettyref{lem:prcovering} with $G$, $H'_1$, $G_{(2,1)},\dots,G_{(2,2m_2+1)}$ and $H_{(2,1)},\dots,H_{(2,2m_2+1)}$
playing the roles of $G$, $H_0$, $G_1,\dots,G_{2m+1}$ and $H_1,\dots,H_{2m+1}$ to obtain a collection $\mathcal{HC}_{1}$ of edge-disjoint
Hamilton cycles such that the union $HC_1 := \bigcup \mathcal{HC}_{1}$ of these Hamilton
cycles satisfies $$H_{1}'\subseteq HC_{1}\subseteq H_{1}'\cup\bigcup_{j=1}^{2m_2+1} H_{(2,j)}\subseteq H'_1\cup G_2.$$
Write $H_{2}:=(G_{2}\cup R_{2})\setminus E(HC_{1})$ for the uncovered
remainder of $G_{2}\cup R_{2}$. Note that 
\begin{itemize}
\item[(HC1)] no edge of $G$ incident to $x_{0}$ is covered more than once in $\mathcal{HC}_{1}$;
\item[(HC1$'$)] $HC_{1}$ contains no edges from $F_1$.
\end{itemize}

Our next aim is to extend $H_{2}$ into a regular graph $H'_2$ using some of the edges of $R'_{2}$. We will then use some of the edges
of $G_3$ in order to find edge-disjoint Hamilton cycles which cover $H'_2$.
Note that
\begin{equation}\label{eq:degreesH2}
d_{H_{2}}(x)=d_{H_{1}}(x)+d_{R_{2}\cup G_{2}}(x)-2|\mathcal{HC}_{1}|
\end{equation}
for all $x\in V(G)$.
Together with the fact that $H_1$ is $u$-downjumping this implies that for all $x\neq x_0$ we have
\begin{align*}
d_{H_{2}}(x_{0})-d_{H_{2}}(x) & = (d_{H_{1}}(x_{0})-d_{H_{1}}(x))+(d_{R_{2}\cup G_{2}}(x_{0})-d_{R_{2}\cup G_{2}}(x))\\
 & \ge u-(\Delta(R_{2})+\Delta(G_{2})-(\delta(R_{2})+\delta(G_{2})))\\
 & \ge u-8\sqrt{np_{2}\log n} \stackrel{(\ref{eq:p2gap})}{\ge} \sqrt{np_{2}}.
\end{align*}
(For the second inequality we used the fact that both $R_2$ and $G_2$ are $p_2$-pseudo\-random together with (P4) and~(P5).)
Thus $x_0$ is the unique vertex of maximum degree in $H_2$ and $H_2$ is $\sqrt{np_{2}}$-downjumping.
If $\Delta(H_2)$ is odd, let $H'_2$ be obtained from $H_2$ by adding some edge from $F_1\setminus F^*$.
Condition~(g) ensures that we can choose this edge in such a way that it is not incident to the unique vertex of minimum degree in the
$(3p_2+p_3)$-pseudorandom graph $R_2\cup G_2\cup R'_2\cup G_3$. Let $F'_1$ be the set consisting of this edge.
If $\Delta(H_2)$ is even, let $H'_2:=H_2$ and $F'_1:=\emptyset$.  
In both cases, let $F_2:=F_1\setminus F'_1$ and note that $H'_2$ is still $\sqrt{np_{2}}$-downjumping. 
Moreover,
\begin{eqnarray*}
\Delta(H_{2}')-\delta(H_{2}') & \le & \Delta(H_{2})-\delta(H_{2})+1\\
 & \stackrel{(\ref{eq:degreesH2})}{\le} & \Delta(H_{1})+\Delta(G_{2})+\Delta(R_{2})-\delta(H_{1})-\delta(G_{2})-\delta(R_{2})+1\\
 & \le & \Delta(G_{1})+\Delta(G_{2})+\Delta(R_{2})-\delta(G_{1})-\delta(G_{2})-\delta(R_{2})+1\\
 & \le & 4\sqrt{np_1\log n}+8\sqrt{np_2\log n}+1\le 5\sqrt{np\log n}\\
 & \le & (np_{2}\log n)^{\frac{5}{7}}.
\end{eqnarray*}
(For the fourth inequality we used the facts that $G_1$ is $p_1$-pseudorandom and both $R_2$ and $G_2$ are $p_2$-pseudorandom
together with (P4) and~(P5). The final inequality follows similarly to~(\ref{eq:pnp_2n}).)
Furthermore, note that $E(H'_2)\cap E(R'_2)\subseteq F'_1$ and so $H'_2-x_0$ and $R'_2-x_0$ are edge-disjoint.
Thus we may apply \prettyref{cor:exactregularise} to find a regular graph $H_{2}''$ of degree $\Delta(H_{2}')$
with $H_{2}'\subseteq H_{2}''\subseteq H_{2}'\cup R'_{2}$. Since
$x_{0}$ is of maximum degree in $H_{2}'$, we have the following:
\textno
No edge from $R'_{2}$ incident to $x_{0}$ was
added to $H'_2$ in order to obtain $H_{2}''$. &(\dagger)

Let $G^*_2:=(R_2\cup G_2\cup R'_2\cup G_3)+F'_1$. Our choice of $F'_1$ and condition~(f) together ensure that we can
apply \prettyref{lem:changingvertices} with $R_2\cup G_2\cup R'_2\cup G_3$ and $F'_1$ playing the roles of $G$ and $F$
to see that $G^*_2$ is $(3p_2+p_3)$-pseudorandom.
Note that for every $1 \leq j \leq 2 m_3 + 1$
$$
np_{(3,j)}\ge(4np_{2})^{\frac{3}{4}}\log^{\frac{5}{2}}n\ge(n(3p_{2}+p_{3}))^{\frac{3}{4}}\log^{\frac{5}{2}}n,
$$
where the first inequality follows similarly to~(\ref{eq:p2j}). 
Hence we may apply Lemma~\ref{lem:prcovering}
with $G^*_2$, $H''_2$, $G_{(3,1)},\dots,G_{(3,2m_3+1)}$ and $H_{(3,1)},\dots,H_{(3,2m_3+1)}$
playing the roles of $G$, $H_0$, $G_1,\dots,G_{2m+1}$ and $H_1,\dots,H_{2m+1}$ to obtain a collection $\mathcal{HC}_{2}$ of edge-disjoint
Hamilton cycles such that the union $HC_2 := \bigcup \mathcal{HC}_{2}$ of these Hamilton
cycles satisfies 
$$
H_{2}''\subseteq HC_{2}\subseteq H_{2}''\cup\bigcup_{j=1}^{2m_3+1} H_{(3,j)} \subseteq H_2'' \cup G_3.
$$
We now have the following properties:
\begin{itemize}
\item[(HC2)] no edge of $G$ incident to $x_{0}$ is covered more than once in $\mathcal{HC}_{1}\cup\mathcal{HC}_{2}$;
\item[(HC2$'$)] $HC_{1} \cup HC_2$ contains no edges from $F_2$;
\item[(HC2$''$)]  $\mathcal{HC}_{1}\cup\mathcal{HC}_{2}$ covers all  edges in $(G_1-F_2) \cup G_2 \cup R_2$.
\end{itemize}
Indeed, to see (HC2), first note that ($\dagger$) implies that all edges incident to $x_{0}$ in $HC_{2}$ are contained in $H_{2}'\cup G_{3}$
and thus in $(H_2+F'_1) \cup G_3$, which is edge-disjoint from $HC_{1}$.
Now (HC2) follows from (HC1) together with the fact that the Hamilton cycles in $\mathcal{HC}_{2}$ are pairwise edge-disjoint.

Write $H_{3}:=G_{3}\setminus E(HC_{2})$ for the subgraph of $G_3$ which is not covered by the Hamilton cycles in $\mathcal{HC}_{2}$.%
    \COMMENT{Recall that $R_2 \subseteq G_1$ and that all edges of $H_1 = G_1 -(F_1\setminus F'_1)$ 
have been covered by $\mathcal{HC}_{1}\cup\mathcal{HC}_{2}$.
So all the edges of $R'_2$ apart from those lying in $F_2$ are covered by $\mathcal{HC}_{1}\cup\mathcal{HC}_{2}$.
So unlike $R_2$ in the previous iteration, we do not need to consider $R_2'$ this time. Alternatively, note that (unlike $R_2$), $R_2'$ is not part of the partition of $G$, which
is why we don't need to consider it explicitly when covering $G$.}
Our final aim is to extend $H_{3}$ into a regular graph $H'_3$ using some of the edges of $R_{4}$.
We will then  use the edges of $G_4$ in order to find edge-disjoint Hamilton cycles which cover $H'_3$
(and thus the edges of $G_3$ not covered so far).
Note that for all $x\in V(G)$
\[d_{H_{3}}(x)=d(H_{2}'')+d_{G_{3}}(x)-2|\mathcal{HC}_{2}|.\]
Together with the fact that $G_3$ is $p_3$-pseudorandom this implies
that
\begin{equation}\label{eq:Delta3}
\Delta(H_{3})-\delta(H_{3})=\Delta(G_{3})-\delta(G_{3}) \stackrel{({\rm P4}),({\rm P5})}{\le } 4\sqrt{np_{3}\log n}.
\end{equation}
Thus we can add a set $F'_2\subseteq F_2\setminus F^*$ of edges at $x_0$ to $H_3$ to ensure that 
$x_0$ is the unique vertex of maximum degree in the graph $H'_3$ thus obtained from $H_3$,
that $H'_3$ is $\sqrt{np_{4}}$-downjumping, $\Delta(H_{3}')$ is even and such that
\begin{equation}\label{eq:sizeF'2}
|F'_2|\le 4\sqrt{np_{3}\log n}+\sqrt{np_4}+1\le 5\sqrt{np_{3}\log n}\le \sqrt{np_{2}\log n}\stackrel{(\ref{eq:p2gap})}{\le}\frac{u}{\log n}.
\end{equation}
Note that $|F_2\setminus F^*|=|F_1\setminus (F'_1\cup F^*)|\ge 2u-2$ by~({\ref{eq:sizeF1}) and since $|F^*|\le u$ by~(\ref{eq:DeltaG4}).
So we can indeed choose such a set $F'_2$.
Moreover, condition~(g) ensures that we can choose $F'_2$ in such a way that it contains no edge which is incident
to the unique vertex of minimum degree in the
$(p'_3+p_3+2p_4)$-pseudorandom graph $G'_3\cup G_3\cup R_4\cup G_4$. Let $F_3:=F_2\setminus F'_2$ and note that
\begin{align*}
\Delta(H_{3}')-\delta(H_{3}') & \le  \Delta(H_{3})-\delta(H_{3})+ \sqrt{np_4}+1 \stackrel{(\ref{eq:Delta3})}{\le} 5\sqrt{np_{3}\log n}
= 5(np_4)^{\frac{2}{3}}\log^{-\frac{11}{6}} n\\
&  \le (np_{4}\log n)^{\frac{5}{7}}.
\end{align*}
Furthermore, $E(H'_3)\cap E(R_4)\subseteq F'_2$ and so $H'_3-x_0$ and $R_4-x_0$ are edge-disjoint.
Since also $p_4 \ge \log^{21}n/n$, we may apply \prettyref{cor:exactregularise}
to obtain a regular graph $H_{3}''$ of degree $\Delta(H_{3}')$
such that $H_{3}'\subseteq H_{3}''\subseteq H_{3}'\cup R_{4}$. Note that since 
$x_{0}$ is of maximum degree in $H_{3}'$, we have the following:
\textno
No edge from $R_{4}$ incident to $x_{0}$ was added to $H'_3$ in order to obtain $H_{3}''$. 
& (\star)

Let $G^*_3:=(G'_3\cup G_3\cup R_4\cup G_4)+F'_2$. Since $|F'_2|\le 5\sqrt{np_{3}\log n}=\sqrt{np'_{3}\log n}/8$ by~(\ref{eq:sizeF'2}),
we may apply \prettyref{lem:changingvertices} with $G'_3\cup G_3\cup R_4\cup G_4$ and $F'_2$ playing the roles of $G$ and $F$
to see that $G^*_3$ is $(p'_3+p_3+2p_4)$-pseudorandom.
 
Note that for every $1 \leq j \leq 2 m_4 + 1$
$$
np_{(4,j)}\ge(4np'_{3})^{\frac{3}{4}}\log^{\frac{5}{2}}n\ge(n(p'_3+p_{3}+2p_{4}))^{\frac{3}{4}}\log^{\frac{5}{2}}n,
$$
where the first inequality follows similarly to~(\ref{eq:p2j}). Recall that $F^*$ denotes the set of all those edges of $G_4$
which are incident to $x_0$. Since $F'_2\cap F^*=\emptyset$,
$H''_3$ and $G_4$ are edge-disjoint (and so $H''_3, H_{(4,1)},\dots,H_{(4,2m_4+1)}$ are pairwise edge-disjoint).
Thus we can apply \prettyref{lem:prcovering}
with $G^*_3$, $H''_3$, $G_{(4,1)},\dots,G_{(4,2m_4+1)}$ and $H_{(4,1)},\dots,H_{(4,2m_4+1)}$
playing the roles of $G$, $H_0$, $G_1,\dots,G_{2m+1}$ and $H_1,\dots,H_{2m+1}$ to obtain a collection $\mathcal{HC}_{3}$ of edge-disjoint
Hamilton cycles such that the union $HC_3 := \bigcup \mathcal{HC}_{3}$ of these Hamilton
cycles satisfies 
$$
H_{3}''\subseteq HC_{3}\subseteq H_{3}''\cup\bigcup_{j=1}^{2m_4+1} H_{(4,j)} \subseteq H_3'' \cup G_4.
$$
We claim that no edge of $G$ incident to $x_{0}$ is covered more than once in
$\mathcal{HC}:=\mathcal{HC}_{1}\cup\mathcal{HC}_{2}\cup\mathcal{HC}_{3}$. Indeed, (HC2) implies that this was the case for
$\mathcal{HC}_{1}\cup\mathcal{HC}_{2}$. Moreover, recall that the Hamilton cycles in $\mathcal{HC}_{3}$ are pairwise edge-disjoint.
In addition, ($\star$) implies that all edges incident to $x_{0}$ in $HC_{3}$ are contained in 
$$
H'_{3}+F^*=H_3+F_2'+F^* \subseteq H_3 + F_2.
$$
So (HC2$'$) implies that none of these edges lies in $HC_{1}\cup HC_{2}$, which proves the claim. 

Note that (HC2$''$) and the definition of $\mathcal{HC}_{3}$ together imply that   $\mathcal{HC}$ covers all edges of $G-F_3$.
Let $F\subseteq F_3$ be the set of uncovered edges.
Then $F$ and $\mathcal{HC}$ are as required in the lemma.
\endproof

We remark that for the final application of \prettyref{lem:prcovering} in the proof of Lemma~\ref{thm:defactomainresult}
it would have been enough to consider $G_3\cup R_4\cup G_4$
instead of $G'_3\cup G_3\cup R_4\cup G_4$ (since $H''_3$ and all the $G_{(4,j)}$ are contained in $(G_3\cup R_4\cup G_4)+ F'_2$).
However, we would not have been able to apply \prettyref{lem:changingvertices} in this case since $|F'_2| > \sqrt{np_{3}\log n}/8$.
Introducing $G'_3$ ensures that the conditions of \prettyref{lem:changingvertices} are satisfied (and this is the only purpose of $G'_3$).

We can now combine Theorem~\ref{lem:hamilton-connected} and Lemma~\ref{thm:defactomainresult}
in order to prove Theorem~\ref{thm:main-result}.

\medskip

\noindent
\emph{Proof of Theorem~\ref{thm:main-result}.}
Lemma~\ref{thm:defactomainresult} implies that a.a.s.~$G$ contains
a collection $\mathcal{HC}$ of Hamilton cycles  and a collection $F$ of edges incident to the unique vertex $x_0$
of maximum degree such that no edge of $G$ incident to $x_{0}$ is contained in more than one Hamilton cycle in $\mathcal{HC}$
and such that the Hamilton cycles in $\mathcal{HC}$ cover precisely the edges of $G-F$.
Moreover, by Theorem~\ref{lem:hamilton-connected}, a.a.s.~$G-x_0$ is Hamilton-connected.

If $|F|$ is odd, we add one edge of $G-F$ incident to $x_0$ to~$F$. We still denote the resulting set of edges by~$F$.
Let $r:=|F|/2$ and $e_1e'_1,\dots,e_re'_r$ be pairs of edges such that $F$ is the union of all these $2r$ edges.
Since $G-x_0$ is Hamilton-connected, for each $1 \le i \le r$ there exists a
Hamilton cycle $C_i$ of $G$ containing both $e_i$ and $e'_i$. Then $\mathcal{HC}\cup \{C_1,\dots,C_r\}$ is a collection of
$\lceil \Delta(G)/2\rceil$ Hamilton cycles covering $G$, as desired.
\endproof

Using further iterations in the proof of Lemma~\ref{thm:defactomainresult}, one could reduce the exponent $117$ in Lemma~\ref{thm:defactomainresult}
(and thus in Theorem~\ref{thm:main-result}). One further iteration would lead to an exponent of $60$, while the effect of yet further iterations quickly becomes 
insignificant.%
\COMMENT{We set $p_5= (np)^{(3/4)^4} (\log n)^{\frac{259}{32} \cdot \frac{3}{4}+ \frac{7}{2}} \sim (np)^{1/2-0.18} (\log n)^{10.57-1}$,
and check that the analogue of~(\ref{eq:pnu}) still holds.
The upper bound can also be with a further iteration. With the methods of~\cite{KOappl}, it can even be improved to $1-O(1/n)$,
however the remaining very very dense case needs an extra argument, so all this does not seem worth mentioning or doing}

\medskip

{\footnotesize \obeylines \parindent=0pt

Dan Hefetz, John Lapinskas, Daniela K\"{u}hn, Deryk Osthus 
School of Mathematics 
University of Birmingham
Edgbaston
Birmingham
B15 2TT
UK
}
\begin{flushleft}
{\it{E-mail addresses}:\\
\tt{\{d.hefetz, jal129, d.kuhn, d.osthus\}@bham.ac.uk}}
\end{flushleft}

\end{document}